\documentclass[a4paper,12pt]{article}
\usepackage{amsmath,amsthm,amsfonts,amssymb,bbm}
\usepackage{graphicx,psfrag,subfigure,color,tikz,float}

\numberwithin{equation}{section}

\newcommand{\Pb}{\mathbb{P}}

\newcommand{\dx}{\mathrm{d}}
\newcommand{\R}{\mathbb{R}}
\newcommand{\N}{\mathbb{N}}
\newcommand{\Z}{\mathbb{Z}}

\newcommand{\GOE}{\mathrm{GOE}}
\newcommand{\GUE}{\mathrm{GUE}}

\usepackage[noadjust]{cite}

\footnotetext{\emph{Key words:} KPZ Universality, TASEP, Shocks, Last Passage Percolation.   \textit{Math. Subj. Class. :} 60K35}

\newtheorem{assumption}{Assumption}
\newtheorem{prop}{Proposition}[section]
\newtheorem{thm}[prop]{Theorem}
\newtheorem{lem}[prop]{Lemma}

\newtheorem{cor}[prop]{Corollary}

\newtheorem{cla}[prop]{Claim}

\newtheorem{rem}[prop]{Remark}

\title{Transition to Shocks  in TASEP and Decoupling of Last Passage Times}
\author{
Peter Nejjar\thanks{peter.nejjar@ist.ac.at.  IST Austria, 3400 Klosterneuburg, Austria. Partially supported by ERC Advanced Grant No. 338804 and ERC Starting Grant No. 716117.}
}

\date{\today}

\begin{document}
\maketitle
\sloppy

\vfill
\begin{abstract}
We consider the totally asymmetric simple exclusion process  in a critical scaling parametrized by $a\geq0$, which creates a shock in the particle density of order $aT^{-1/3},$ $T$ the observation time.  When at $a=0$ one has step initial data, we provide bounds on the limiting law of particle positions for $a>0,$ which in particular imply that  in the double limit $\lim_{a \to \infty}\lim_{T \to \infty}$ one recovers  the product  limit law  and  the degeneration of the correlation length observed at   shocks of order $1$. This result can be phrased in terms of a general last passage percolation   model. We also obtain bounds on the decoupling of two-point functions of several $\mathrm{Airy}$  processes.
\end{abstract}
\vfill
\section{Introduction}
We consider the totally asymmetric simple exclusion process (TASEP). In this model,  particles move on $\Z$ and jump one step to the right with rate 1, subject to the exclusion constraint that there is at most one particle on each site such that particles attempting to jump to an occupied site stay put. A particle configuration at time $T\geq 0$ can be encoded by an $\eta_{T} \in \{0,1\}^{\Z}$  ($\eta_{T}(i)=1$ if $i$ is occupied at time $T$, $\eta_{T}(i)=0$ if not) and $(\eta_{T})_{T\geq 0}$ is the TASEP, see \cite{Li85b} for  its  rigorous construction.

 Given an initial configuration $\eta_{0}$  in  TASEP, let us attach a label $n\in \Z$ to each particle  and denote by $x_{n}(T)$ the position of particle $n$ at time $T\geq 0$. 
 Depending on  $\eta_{0}$, one has different   large time  densities of particles $\rho(\xi)$, where, informally, $\rho(\xi)$ is the probability that there is a particle at $\lfloor \xi T \rfloor $ for $T$ large. Formally, $\rho$ is the density function of the measure to which the rescaled empirical particle density converges vaguely as $T\to \infty$ :
 
 \begin{equation}  \label{partdens}
\lim_{T \to \infty}\frac{1}{T}\sum_{i \in \Z}\delta_{\frac{i}{T}}\eta_{\,T}(i)=\rho(\xi)\dx \xi,
\end{equation}
 with $\delta_{x}$ the Dirac measure, and $\rho$ is  the unique entropy solution to the Burgers equation.
 Consider for instance the initial data 
  \begin{equation}\label{stepa}
 x_{n}(0)=\begin{cases}-n, \quad \quad \quad \quad \quad \, &\mathrm{for \,\,}-\lfloor a T^{2/3}\rfloor \leq n\leq 0
 \\ -n-\lfloor aT^{2/3}\rfloor  \quad  &\mathrm{for \,\,} n\geq 1,
 \end{cases}
 \end{equation}
 where $a\geq 0$ is a constant. The density profile  created by this initial data does not depend on $a$ and  is given in Figure \ref{densfig} left : It has a region where the  particle density is linearly decreasing, which  is called a \textit{rarefaction fan}.

 \begin{figure}[H]\label{densfig}
 \begin{center}
   \begin{tikzpicture}
     \draw [very thick,->] (-2,0) -- (2,0) node[below=4pt] {\small{$\xi$}};
  \draw [very thick,->] (0,-0.5) -- (0,2.5);
    \draw (0.3,1.3) node[anchor=south]{\small{$1$}};
    \draw[red,thick] (-2,1.3) -- (-1,1.3);
    \draw[red,thick] (-1,1.3)--(1,0);
    \draw[very thick] (-1,-0.1) -- (-1,0.1);
       \draw (-0.8,-0.6) node[anchor=south]{\small{$-1$}};
    \draw[very thick] (1,-0.1) -- (1,0.1);
       \draw (1.2,-0.6) node[anchor=south]{\small{$1$}};
\filldraw(0,1.3) circle(0.08cm); 
    \draw (-0.1,2) node[anchor=east] {\small{$\rho(\xi)$}};

\begin{scope}[xshift=7cm]
 \draw [very thick,->] (-2,0) -- (2,0) node[below=4pt] {\small{$\xi$}};
  \draw [very thick,->] (0,-0.5) -- (0,2.5);
    \draw (0.3,1.3) node[anchor=south]{\small{$1$}};
    \draw[red,thick] (-2,1.3) -- (-1.7,1.3);
    \draw[red,thick] (-1.7,1.3)--(0,0.45);
    \draw[very thick, red] (0,0.9) -- (1.8,0);

\filldraw(0,1.3) circle(0.08cm); 
    \draw (-0.1,2) node[anchor=east] {\small{$\rho(\xi)$}};

   \end{scope}
   \end{tikzpicture}    \end{center} \caption{Left: Large time density of TASEP started from initial data \eqref{stepa}. The density $\rho$ decreases linearly from $1$ to $0$ in the interval $[-1,1]$.
     Right: Density profile for TASEP  started from initial data \eqref{shock}. At the origin, two regions of decreasing density come together and the density $\rho$ jumps from $(1-\beta)/2$  to $(1+\beta)/2,$ i.e. there is a shock.}
\end{figure}
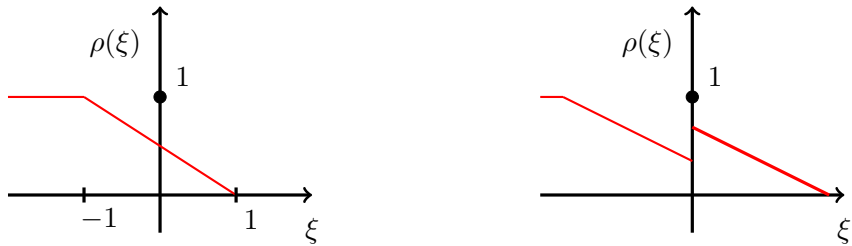

Consider now in contrast for   $\beta \in (0,1)$ the initial configuration
 \begin{equation}\label{shock}
 \tilde{x}_{n}(0)=\begin{cases}-n, \quad \quad \quad \quad \quad \, &\mathrm{for \,\,}-\lfloor \beta  T\rfloor \leq n\leq 0
 \\ -n-\lfloor \beta T\rfloor  \quad & \mathrm{for \,\,} n\geq 1.
 \end{cases}
 \end{equation}
The density profile has two distinct regions where the density is decreasing  and there is a discontinuity between them, see Figure   \ref{densfig} right. This is called a \textit{shock}, which in  this case is located at the origin.

The main topic of this paper is the transition of fluctuations of particle positions  when $\rho$ is continuous to when $\rho$ has  a shock. The fluctuations in these two situations are known to be very different (see  \eqref{step}, \eqref{shockb} below for the precise statements):

 If we choose $\nu>0$ so that $\tilde{x}_{\nu T}(T)$ is  located at the shock, i.e. at the origin,   the fluctuations of  $\tilde{x}_{\nu T}(T)$ are given  by a product of two  Tracy-Widom $F_{\mathrm{\GUE}}$ distributions and particles are non-trivially correlated on the $T^{1/3}$ length scale.  
 For $x_{\nu (a) T}(T)$ (where $\nu (a)$ is chosen so that  $x_{\nu (a) T}(T)$ is  located at the origin, see  \eqref{number}) and $a=0,$ however, the fluctuations are given  by a single Tracy-Widom $F_{\mathrm{\GUE}}$ distribution and particles are non-trivially correlated on the $T^{2/3}$ scale. 
 
If we - illegally -  set $a=\beta T^{1/3}$  (since $a,\beta $ are fixed constants independent of $T,$ one cannot have $a=\beta T^{1/3}$),  then $x_{n}$ and $\tilde{x}_{n}$ coincide and (trivially) have the same fluctuation behavior. Continuing with this informal heuristics,  letting $T \to \infty$ in $a=\beta T^{1/3}$ leads to $a\to \infty$  (for $\beta$ fixed)  as well as to $\beta \to 0$ (for $a$ fixed).  So if we want the two fluctuation behaviors to coincide, it seems reasonable to  consider the double limit $\lim_{a\to \infty}\lim_{T\to \infty}$ of $x_{\nu(a) T}(T),$ and the double limit $\lim_{\beta \to 0}\lim_{T\to \infty}$ of $\tilde{x}_{\nu T}(T).$   
As the following corollary  of our main result, Theorem \ref{thmtasep}, shows,  it is indeed with these two double limits that  a continuous transition between the two scaling regimes occurs:
\begin{cor}\label{corintro}Consider the initial data $x_{n},\tilde{x}_{n}$ from \eqref{stepa}, \eqref{shock} and let $u \in \R, \xi=\frac{u}{2}\frac{\beta-1}{\beta}.$   Then 
\begin{align}&\label{abc}\lim_{a\to \infty } \lim_{T \to \infty}\Pb \bigg( x_{\lfloor \frac{T}{4}-T^{2/3}\frac{a+\frac{u}{a}}{2}+T^{1/3}\frac{(\frac{u}{a}+a)^{2}}{4}\rfloor}(T)\geq T^{2/3}\frac{u}{a}-\frac{T^{1/3}}{2^{1/3}}s \bigg)
\\&=\label{abcd}\lim_{\beta \to 0} \lim_{T \to \infty}\Pb \bigg( \tilde{x}_{\lfloor        T\frac{(1-\beta)^{2}}{4}+\xi T^{1/3}         \rfloor}(T)\geq T^{1/3}\frac{u}{\beta} -\frac{T^{1/3}}{2^{1/3}}s \bigg)
\\&=\label{lastid}F_{\GUE}(s)F_{\GUE}(s-u2^{4/3}).
\end{align}
\end{cor}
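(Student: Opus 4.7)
The plan is to prove the two equalities in Corollary \ref{corintro} separately. The identity \eqref{abc} $=$ \eqref{lastid} will be obtained by specializing Theorem \ref{thmtasep} to the scaling given and sending $a\to\infty$, while \eqref{abcd} $=$ \eqref{lastid} will follow from the known shock asymptotics for TASEP started from \eqref{shock}, followed by a $\beta\to 0$ calculation.

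For the first equality, I would first identify the macroscopic parameters: the particle index $\lfloor T/4 - T^{2/3}(a+u/a)/2 + T^{1/3}(a+u/a)^2/4 \rfloor$ is the standard rarefaction-fan index for a probe located near position $T^{2/3}u/a$, shifted by a $T^{1/3}$ correction that translates linearly into the Tracy--Widom argument $s$. Theorem \ref{thmtasep} sandwiches the probability in \eqref{abc} between explicit upper and lower bounds; by the theorem's hypotheses, both bounds converge to $F_{\GUE}(s)F_{\GUE}(s - u\,2^{4/3})$ in the iterated limit $\lim_{a\to\infty}\lim_{T\to\infty}$. Here $2^{4/3}$ is the standard TASEP KPZ rescaling constant, and the shift $u\,2^{4/3}$ encodes the relative separation of the two Airy contributions at the weak shock of strength $a T^{-1/3}$.

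For the second equality, I would invoke the shock asymptotics for TASEP started from \eqref{shock} (cf.\ Ferrari--Nejjar), which for fixed $\beta \in (0,1)$ and $\xi \in \R$ give convergence of $\tilde x_{\lfloor T(1-\beta)^2/4 + \xi T^{1/3}\rfloor}(T)$ to a product of two $F_{\GUE}$ distributions whose arguments are explicit affine functions of $(\beta, \xi, u, s)$. The non-standard feature here is that both the position offset $T^{1/3}u/\beta$ and the index shift $\xi T^{1/3}$ diverge as $\beta\to 0$; the choice $\xi = u(\beta-1)/(2\beta)$ is calibrated so that after substitution the two $F_{\GUE}$ arguments converge to $s$ and $s - u\,2^{4/3}$ respectively. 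This step reduces to a direct substitution and a first-order expansion in $\beta$, with no probabilistic input beyond the fixed-$\beta$ shock limit.

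The main obstacle is on the \eqref{abc} side: Theorem \ref{thmtasep} provides only \emph{bounds}, so deducing \eqref{abc} requires showing that these upper and lower bounds coalesce to the same product law in the $a \to \infty$ limit. This is tantamount to establishing the asymptotic decoupling, into two independent $F_{\GUE}$ factors, of the two competing Airy-type processes that control the location of $x_{\nu(a)T}(T)$ at the critical shock strength $aT^{-1/3}$. The computation for \eqref{abcd} is by contrast routine bookkeeping, since no further probabilistic limit is needed once the fixed-$\beta$ shock result is invoked.
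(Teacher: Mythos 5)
Your proposal follows the paper's own proof: the paper obtains the corollary precisely by combining \eqref{astep} (i.e.\ Theorem \ref{thmtasep} with $k=k(a)\to\infty$, $k(a)/a\to 0$, $\varepsilon(a)\in(k(a)/a,1)$, $\varepsilon(a)\to 0$, and finally $\delta\downarrow 0$, so that the upper and lower bounds coalesce) with the $\beta\to 0$ substitution \eqref{tada} of the fixed-$\beta$ shock law \eqref{shockb} at $\xi=\frac{u}{2}\frac{\beta-1}{\beta}$, exactly your two steps. The only remark is that closing the sandwich is elementary bookkeeping already furnished by Theorem \ref{thmtasep} rather than a further decoupling argument (as your last paragraph might suggest), and that, as in the paper, the transfer of the $T^{1/3}(\frac{u}{a}+a)^{2}/2$ term between the position threshold and the particle label is treated as a simple reparametrization at density $1/2$.
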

Note that if  we again formally set $a=\beta T^{1/3}, $  and take  $u \in \R, \xi=\frac{u}{2}\frac{\beta-1}{\beta},$ then we have  
\begin{equation}\label{number}
 \frac{T}{4}-T^{2/3}\frac{a+\frac{u}{a}}{2}+T^{1/3}\frac{(\frac{u}{a}+a)^{2}}{4}=T\frac{(1-\beta)^{2}}{4}+\xi T^{1/3} +\mathcal{O}(T^{-1/3}),
\end{equation}
which motivates the choice of the particle number in \eqref{abc}.

 Finally,  the identity \eqref{lastid} is a simple consequence of the known convergence \eqref{shockb}.
See also after Theorem \ref{thmtasep} for a further discussion of this result.

\subsection{TASEP and Last Passage Percolation}
Here we introduce the notation for the two models considered in this paper, namely TASEP and Last Passage Percolation (LPP).
We consider TASEP with particles labelled from right to left, i.e., when $x_{n}(T)$ denotes the position of particle number $n \in \Z$ at time $T$ 
we have
\begin{equation}
\cdots<x_{2}(0)<x_{1}(0)<x_{0}(0)<x_{-1}(0)<x_{-2}(0)\cdots,
\end{equation}
note this order is preserved in time. TASEP is in one-to-one correspondence with last passage percolation, which we define next. Fix $(m,n)\in \Z^{2}$ (the end point) and $\mathcal{L}\subseteq \Z^{2}$ (the starting set). Let $\{\omega_{i,j}\}_{(i,j) \in \Z^{2}}$ be nonnegative random variables, seen as weights at the point $(i,j).$ An up-right path $\pi=(\pi(0),\ldots,\pi(k))$ from $\mathcal{L}$ to $(m,n)$  is a sequence of points with $\pi(0)\in \mathcal{L},\pi(k)=(m,n), \pi(i)-\pi(i-1)\in \{(0,1),(1,0)\}$. Then the LPP time from $\mathcal{L}$ to $(m,n)$ is defined as 
\begin{equation}\label{LPP}
L_{\mathcal{L}\to (m,n)}=\max_{\pi : \mathcal{L} \to (m,n)}\sum_{(i,j)\in \pi}\omega_{i,j}
\end{equation}
where the maximum in \eqref{LPP} is taken over all up-right paths from $\mathcal{L} $ to $(m,n)$. We will only consider $\{\omega_{i,j}\}_{(i,j)\in \Z^{2}}$ such that there is a.s.  a unique path $\pi$ where the maximum
\eqref{LPP} is attained, and we denote this path by $\pi^{\mathrm{max}}$. When there are no or infinitely many  paths from $\mathcal{L} $ to $(m,n)$, we set, say, $L_{\mathcal{L}\to (m,n)}= \infty,$ also \eqref{LPP} straightforwardly generalizes to several end points.  

Given an initial data $\{x_{n}(0)\}_{n\in I},I\subset \Z,$ of TASEP we set 
$\mathcal{L}=\{(x_{n}(0)+n,n),n \in I\}$. Assuming all particles have an exponential clock with parameter $1$ we take $\{\omega_{i,j}\}_{(i,j)\in \Z^{2}}$ independent, and  $\omega_{i,j}\sim \exp(1)$ if   $(i,j)\notin \mathcal{L}$ and $\omega_{i,j}=0$ for $(i,j)\in \mathcal{L}.$ 
With this choice,
the link between TASEP and LPP is given by 
\begin{equation}\label{LPPTASEP}
\Pb(x_{n}(T)\geq m-n)=\Pb(L_{\mathcal{L}\to (m,n)}\leq T).
\end{equation}

\subsection{Related Works}\label{relwork}
Here we rehash some of the known results that we will use in this paper, and describe the method of proof from \cite{FN14}, which is relevant to this paper.

A result we shall use repeatedly in this paper concerns the transversal fluctuations of the maximizer $\pi^{\mathrm{max}}:$ when  $\omega_{i,j}$  are i.i.d.  and  $\omega_{i,j} \sim \exp(1),$ there are bounds on the probability that the maximizer 
$\pi^{\mathrm{max}}$ from $(0,0)$ to a point  $( \tau T, T)$ deviates more than $k T^{2/3}$ from the straight line $\{(\kappa \tau T, \kappa T)$,  $0\leq \kappa \leq 1\}$. This is a result from \cite{BSS16} and   cited here as Theorem 3.2. 
Furthermore, for line-to-point problems, we have detailed control over the distribution of the (random) starting point of $\pi^{\max}$, see  
\cite{Pi17}, Lemma 1.1, Lemma 1.2 and also (4.18) in \cite{FO17}.

The fluctuation behavior at shocks, rarefaction fans and flat (constant density) TASEP have been obtained in detail.  We collect  here the relevant results.

\begin{itemize}
\item[i)] Rarefaction fan: By Theorem 1.6 of  \cite{Jo00}, for  the initial data \eqref{stepa}  for $a=0$ we have  that, for $u \in \R$ 
 \begin{equation}\label{step}
\lim_{T \to \infty}\Pb\left(\frac{x_{\lfloor T/4+u 2^{-2/3}T^{2/3}\rfloor }(T)+u 2^{1/3}T^{2/3}-u^{2}T^{1/3}2^{-1/3}}{-T^{1/3}2^{-1/3}}\leq s\right)=F_{\GUE}(s).
\end{equation}
 
\item[ii)]Flat TASEP: For flat TASEP, we have the following result, which is  a reformulation of Theorem 2.8 in \cite{FO17}. For  $\varrho\in (0,1),$  $x^{\varrho}_{n}(0)=-\lfloor n/\varrho\rfloor $ and $u \in \R$, we have that 
\begin{equation}\label{flatT}
\lim_{T\to \infty}\Pb\left(x_{\lfloor \varrho(1-\varrho)T+uT^{2/3}\rfloor }^{\varrho}(T)\geq -\frac{u}{\varrho}T^{2/3}-\frac{(1-\varrho)^{2/3}}{\varrho^{1/3}}T^{1/3}s\right)=F_{\GOE}(2^{2/3}s).
\end{equation}

\item[iii)] Shocks: In the shock case \eqref{shock}, we have  (see \cite{FN14}, Corollary 2.7) that 
\begin{equation}\begin{aligned}\label{shockb}
\lim_{T\to\infty}\Pb\left( \tilde{x}_{\lfloor \frac{(1-\beta)^2}{4} T+\xi T^{1/3}\rfloor } (T) \geq   -sT^{1/3} \right) =&F_{\GUE}\left(\frac{s-\xi/\rho_1}{\sigma_1}\right)\\&\times F_{\GUE}\left(\frac{s-\xi/\rho_2}{\sigma_2}\right)
\end{aligned}\end{equation}
 for $\xi \in \R,$ $\rho_1=\frac{1-\beta}{2}$, $\rho_2=\frac{1+\beta}{2}$, $\sigma_1=\frac{(1+\beta)^{2/3}}{2^{1/3}(1-\beta)^{1/3}}$, and $\sigma_2=\frac{(1-\beta)^{2/3}}{2^{1/3}(1+\beta)^{1/3}}$.
 \end{itemize}
  In fact, in \cite{FN14},  a general Theorem is presented which is then applied to other shock initial data as well. 
   This general Theorem is formulated in terms of LPP. By the link \eqref{LPPTASEP} 
 the distribution of the particle position at the shock (e.g. $\tilde{x}_{\lfloor \frac{(1-\beta)^2}{4} T+\xi T^{1/3}\rfloor } (T)$ from \eqref{shockb}) is equivalent to studying $\max\{L_{\mathcal{L}^{-}\to E},L_{\mathcal{L}^{+}\to E}\}$
 where $E\in \Z^{2}$ is chosen to be at the shock and $\mathcal{L}^{+}$ (resp. $\mathcal{L}^{-}$) lie in the upper left (resp. lower right) quadrant. The main observation of \cite{FN14} is that $L_{\mathcal{L}^{-}\to E},L_{\mathcal{L}^{+}\to E}$ decouple as $T\to \infty$. 
 
 This decoupling is based on three facts:
\begin{itemize}
\item[ i)] The maximizers  $\pi^{\mathrm{max}}_{\mathcal{L}^{+}\to E},$$\pi^{\mathrm{max}}_{\mathcal{L}^{-}\to E}$ start at points with distance $\mathcal{O}(T)$ 
 with probability $1$ as $T \to \infty.$ 
\item[ii)] The  transversal fluctuations of   $\pi^{\mathrm{max}}_{\mathcal{L}^{+}\to E},$$\pi^{\mathrm{max}}_{\mathcal{L}^{-}\to E}$ are $\mathcal{O}(T^{2/3}).$ [In fact, in \cite{FN14} it suffices to know they are $o(T^{\chi})$ for some $\chi<1$.]
 \item[iii)] The slow decorrelation phenomenon  \cite{CFP10b}:  Consider  a point $E^{+}$ on the characteristic line joining $\mathcal{L}^{+}$ and $ E
$ (the characteristic line is the deterministic line which $\pi^{\mathrm{max}}_{\mathcal{L}^{+}\to E}$, on the $\mathcal{O}(T)$  scale, follows). If  $|| E^{+}-E||=\mathcal{O}(T^{\nu}),\nu <1$, then  $T^{-1/3}(L_{\mathcal{L}^{+}\to E^{+}}+\mu T^{\nu} -L_{\mathcal{L}^{+}\to E})$ converges to zero in probability when choosing the right value for $\mu$.
\end{itemize}

Point iii) allows to replace $L_{\mathcal{L}^{+}\to E}$ by $L_{\mathcal{L}^{+}\to E^{+}}$ . If we take $\nu >2/3,$   then by points i), ii),   $\pi^{\mathrm{max}}_{\mathcal{L}^{+}\to E^{+}},$$\pi^{\mathrm{max}}_{\mathcal{L}^{-}\to E}$ stay in (deterministic) disjoint sets with very high probability, i.e. $L_{\mathcal{L}^{-}\to E},L_{\mathcal{L}^{+}\to E^{+}}$  are asymptotically independent, leading to the result.

Furthermore, in \cite{FN15}, we had studied the critical scaling for flat TASEP, where the shock is created by the presence of different speeds in the model, and numerically obtained the transition to the product structure of \cite{FN14}.
Finally, after this work was posted on arxiv, the transition to shock fluctuations for flat TASEP was obtained in the recent work \cite{QR18}, by completely different methods than ours, namely  new exact determinantal formulas for TASEP, and without using the LPP picture.

\subsection{Contributions of this paper}

This paper is the first to study the transition of fluctuations when the density is smooth to the fluctuations when there is a shock.
Corollary \ref{corintro}, and all other results in this paper, are obtained by working in the last passage percolation (LPP) picture.
  In terms of LPP, studying the transition to shock fluctuations means to study the maximum of two last passage times which remain correlated for all $T >0$, but which, as we show, decouple in a double limit $\lim_{a \to \infty} \lim_{T \to \infty},$ where $a$ is an extra parameter in the TASEP/LPP model.

  As a concrete model, corresponding to the initial data \eqref{stepa}, we consider in  Theorem \ref{Thmpp} for $a\geq 0$ the starting sets
 $\mathcal{L}^{+}=(-\lfloor a T^{2/3}\rfloor,0),\mathcal{L}^{-}=(0,-\lfloor a T^{2/3}\rfloor)$ and an end point $E=(\lfloor T+\frac{u}{a}T^{2/3}\rfloor,\lfloor T\rfloor).$  For $a>0$, we are in a critical scaling. A  lower bound for $ \Pb(\max\{L_{\mathcal{L}^{+}\to E},L_{\mathcal{L}^{-}\to E}\}\leq s)$ is provided by the FKG inequality,  so the main work is to find a suitable upper bound, which we do in    Theorems \ref{thmtasep} and \ref{Thmpp}. These bounds in particular imply 
  that  one recovers, in the double limit $\lim_{a \to \infty}\lim_{T \to \infty},$ the product structure  of \cite{FN14} as in Corollary \ref{corintro}.

 One can deduce from Theorem \ref{Thmpp}  a statement about the decoupling of the two-point function of the $\mathrm{Airy}_2$ process, see Corollary \ref{CorA2}. While more precise statements  than ours are available (see \cite{TrWid11},\cite{Wid03}, \cite{AvM03}), our proof is new and probabilistic as we make use of the convergence in LPP; which gives some intuition as to why the decoupling happens.

We also consider shocks which, unlike in \eqref{shockb}, are not between two regions of decreasing density, but two regions of (different) constant densities $\varrho_{1}>\varrho_{2}$. 
In this case, however, the fluctuations of the macroscopic shock, i.e. the analogue of \eqref{shockb}, has not been obtained\footnote{In Corollary 2.5  of \cite{FN14}, a shock between regions of constant density was considered, which is however created by slow particles.}, but see \cite{FGN17}, Section 2 for computations in this direction.  

We prove the fluctuations for such a macroscopic  shock in Theorem \ref{shockflat}, which gives a product of two $F_{\GOE}$ distributions.
The analogue of Corollary \ref{corintro}, i.e. the transition of the fluctuations of TASEP with constant density to the fluctuations of Theorem \ref{shockflat}, is stated in \eqref{you}, the proof is however only sketched  at the end of  Section \ref{othgen}. 

Theorem \ref{Thmpp} can be seen as an instance of a  general  Theorem about the decoupling of last passage times under some assumptions, see Theorem \ref{GenGen}. Theorem \ref{GenGen} is much simpler than the general  Theorem 2.1 of \cite{FN14}, and at the same time, gives a stronger result, as it provides some upper and lower bounds. Furthermore,  Theorem \ref{GenGen} gives the framework to show  the  decoupling of the $\mathrm{Airy}_1,\mathrm{Airy}_{2\to 1}$ processes (see \cite{BFS07},\cite{BF07} for definitions), which has not been done before, see Theorem \ref{airydec}.   The decoupling of these processes corresponds to the decoupling of last passage times $L_{\mathcal{L}\to E_1},L_{\mathcal{L}\to E_2}$ where $\mathcal{L}$ is now a (half-) line and the points $E_1,E_2$ have distance $a T^{2/3}$ from each other. Finally,  as the simplest example of decoupling, we show in Theorem \ref{twotime} the decoupling along the \textit{time-like} direction in exponential LPP. Recently, exact formulas (\cite{Jo17},\cite{Jo18}) for the two time distribution have been found, and such a  decoupling had been expected to occur,   see \cite{Jo17}, Remark 2.3.

 \subsubsection{Methods of proof}
    Let us briefly describe how our methods differ from related work, in particular \cite{FN14} (see Section  \ref{relwork}). First, and unlike in \cite{FN14}, the maximizing paths now start at distance $\mathcal{O}(T^{2/3})$ from each other, which is the scale of their transversal fluctuations. This requires us   to use a   refined control over transversal fluctuations of maximizers in LPP, using results from \cite{BSS16}, see Theorem \ref{trans1}. The second main  ingredient to Theorem \ref{Thmpp} is an extended slow decorrelation result.  Namely, since we have two maximizers which start in points with distance $\mathcal{O}(T^{2/3})$ and go to  $E,$ the maximizers  will come together   already at distance $\mathcal{O}(T)$ from $E$. Consequently, in order to obtain independent passage times,  we wish to replace      $L_{\mathcal{L}^{+}\to E}$ by $L_{\mathcal{L}^{+}\to E^+},$ with $E^+$  on the straight (characteristic) line from $\mathcal{L}^{+} $ to $E$ and at distance $\varepsilon T$ from $E$. If $\varepsilon$ is not too small, the probability that the maximizers of $L_{\mathcal{L}^{+}\to E^+},L_{\mathcal{L}^{-}\to E}$ cross  will vanish for $a,T $ large. In the usual slow decorrelation (see Theorem 2.1 in \cite{CFP10b}), used in \cite{FN14},   $E^{+}$ is at distance 
      $T^{\nu},\nu <1,$ from $E$  such that  the fluctuations from $E^{+}$ to $E$ vanish under the $T^{1/3}$ scaling. In our situation, however, they do not vanish  as $T \to \infty.$ Nevertheless, they  are only of order $\varepsilon^{1/3}T^{1/3}$; in particular, they vanish in the double limit $\lim_{\varepsilon \to 0}\lim_{T \to \infty}$. 
      We show that it is possible to choose $\varepsilon= \varepsilon(a)$ in such a way that $\varepsilon(a)$ goes to zero with $a$, but is large enough so that the maximizers stay in disjoint sets with high probability (see Section \ref{proofsec}), leading to Theorem \ref{Thmpp}.
      
To show the decoupling of the $\mathrm{Airy}_1,\mathrm{Airy}_{2\to 1}$ processes, no slow decorrelation result is needed, but a control over  transversal fluctuations and  the (random) starting point of the maximizing path, the latter  was obtained recently in \cite{Pi17} and \cite{FO17}. 
 Finally, for the macroscopic shock proven in Theorem \ref{shockflat}, no extended slow decorrelation or refined control over transversal fluctuations and starting points is needed, but we do need the universality of the $F_{\GOE}$ distribution in flat TASEP, cited here in \eqref{flatT}.

 \textit{Outline.} In Section \ref{modres} we state all the results obtained in this paper. They concern either point-to-point LPP problems, or line-to-point LPP problems.
 
    In Section \ref{proofsec} we prove all results that concern point-to-point problems, and the general Theorem \ref{GenGen}. Specifically,  Corollary  \ref{corintro}, Theorem \ref{thmtasep}, Theorem \ref{Thmpp}, Corollary \ref{CorA2},  Theorem \ref{twotime}
    and Theorem  \ref{GenGen} are proven in Section \ref{proofsec}.
 In Section \ref{othgen} we prove all results that concern line-to-point problems: These are Theorem \ref{shockflat} and Theorem \ref{airydec}. Finally, an outline of the proof for the transition to shock fluctuations for flat TASEP, stated in \eqref{you}, is given.
 
\textbf{Acknowledgements} We thank M\'{a}rton Bal\'{a}zs for discussing \cite{BCS06} with us, and  Patrik Ferrari and Zhipeng Liu for useful discussions regarding this paper, as well as the anonymous referee for helpful comments.

\textbf{Notation} We denote for $x \in \R$  by $\lfloor x \rfloor $ the largest $z \in \Z$ with $z \leq x$,  and $T,t$ are always large time parameters which go to infinity. $C,c$ denote constants whose exact values are  immaterial and do not depend on the parameters present (mostly $a,t,k$).

\section{Main Results}\label{modres}

\subsection{Transition to Shock Fluctuations}
 The following Theorem provides the transition from the fluctuations of TASEP with step initial data to the fluctuations at the $\GUE-\GUE$ shock of \eqref{shockb}.

\begin{thm}\label{thmtasep}
Let $x_{n}(0)=-n$ for $-\lfloor a T^{2/3}\rfloor \leq n\leq 0$ and $x_{n}(0)=-n-\lfloor aT^{2/3}\rfloor $ for $n \geq 1$. 
Then there are constants $C, c>0$ such that for any $0<k<a, \delta >0$ and $\frac{k}{a}<\varepsilon(a)<1$ we may bound \begin{align*}
&F_{\GUE}(s)F_{\GUE}(s-u2^{4/3})\\&\leq \lim_{T \to \infty}\Pb \bigg( x_{\lfloor \frac{T}{4}-T^{2/3}\frac{a+\frac{u}{a}}{2}\rfloor}(T)\geq \frac{u}{a}T^{2/3}+T^{1/3}\frac{(\frac{u}{a}+a)^{2}}{2}-\frac{T^{1/3}}{2^{1/3}}s \bigg)
\\&\leq F_{\GUE}\left(\frac{s+\delta}{(1-\varepsilon(a))^{1/3}}\right)F_{\GUE}\left(s-u2^{4/3}\right)\\&+F_{\GUE}(-\delta \varepsilon(a)^{-1/3})+Ce^{-ck}.
\end{align*}

\end{thm}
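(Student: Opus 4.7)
The plan is to translate the statement to last passage percolation via \eqref{LPPTASEP} and then run the decoupling argument of \cite{FN14} sketched in Section \ref{relwork}, adapted to the critical regime in which the two effective starting corners are separated only by $\Or(T^{2/3})$. Setting $n_{0}=\lfloor T/4-T^{2/3}(a+u/a)/2\rfloor$ and letting $E=(m,n_{0})$ be the endpoint encoded by the right-hand side of the probability, the event in the theorem becomes $\{L^{-}\leq T,\,L^{+}\leq T\}$, where $L^{-}:=L_{(0,0)\to E}$ and $L^{+}:=L_{(-\lfloor aT^{2/3}\rfloor,1)\to E}$; indeed the staircase weights vanish, so $L_{\mathcal{L}\to E}=\max(L^{-},L^{+})$. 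A direct KPZ-centring calculation together with \eqref{step} identifies the marginals $\Pb(L^{-}\leq T)\to F_{\GUE}(s)$ and $\Pb(L^{+}\leq T)\to F_{\GUE}(s-u2^{4/3})$ as $T\to\infty$.

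The lower bound is now immediate: $\omega\mapsto L^{\pm}$ are both monotone increasing, so $\{L^{\pm}\leq T\}$ are decreasing events, and the FKG inequality gives
\[ \Pb(L^{-}\leq T,\,L^{+}\leq T)\geq \Pb(L^{-}\leq T)\Pb(L^{+}\leq T), \]
whose limit is $F_{\GUE}(s)F_{\GUE}(s-u2^{4/3})$.

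The upper bound is the substantive part. Because both $L^{\pm}$ terminate at $E$ their maximizers coalesce well before $E$, forbidding a direct decoupling; I would therefore pull the endpoint of $L^{-}$ backwards along its characteristic to a point $E^{-}$ at macroscopic distance $\varepsilon(a)T$ from $E$. By superadditivity $L^{-}\geq L_{(0,0)\to E^{-}}+L_{E^{-}\to E}$, so $\{L^{-}\leq T\}$ implies either $L_{(0,0)\to E^{-}}\leq T-\mu\varepsilon(a)T+\delta T^{1/3}$ (with $\mu$ the deterministic LPP rate) or the middle segment $L_{E^{-}\to E}<\mu\varepsilon(a)T-\delta T^{1/3}$. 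The latter is an \emph{extended} slow-decorrelation failure: $L_{E^{-}\to E}$ has centre $\mu\varepsilon(a)T$ and fluctuation scale $\varepsilon(a)^{1/3}T^{1/3}$, so its Tracy--Widom lower tail contributes $F_{\GUE}(-\delta\varepsilon(a)^{-1/3})$ — unlike the usual slow decorrelation of \cite{CFP10b} the residual is only of order $\varepsilon(a)^{1/3}T^{1/3}$ rather than $o(T^{1/3})$, and this is exactly the tail that has to be paid. Having replaced $L^{-}$ by the shorter LPP, I invoke the transversal fluctuation bound of \cite{BSS16} (Theorem 3.2 of the paper): outside a set of probability at most $Ce^{-ck}$, the maximizers of $L_{(0,0)\to E^{-}}$ and of $L^{+}$ stay within $kT^{2/3}$ of their respective characteristic lines. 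At the height of $E^{-}$ these characteristics are laterally separated by $\Omega(\varepsilon(a)aT^{2/3})\gg kT^{2/3}$ exactly when $\varepsilon(a)>k/a$, so the two maximizers lie in disjoint rectangles and, restricted to that event, the two LPP values become functions of disjoint collections of weights, hence independent. Applying \eqref{step} to each (the shorter LPP contracting its fluctuation scale by $(1-\varepsilon(a))^{1/3}$) produces the product $F_{\GUE}((s+\delta)/(1-\varepsilon(a))^{1/3})F_{\GUE}(s-u2^{4/3})$.

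The main obstacle is the simultaneous calibration of $k$, $\delta$, and $\varepsilon(a)$: $\varepsilon(a)$ must be chosen small so that $(1-\varepsilon(a))^{1/3}\to 1$ and the residual tail $F_{\GUE}(-\delta\varepsilon(a)^{-1/3})$ behaves well when $a\to\infty$, yet it must exceed $k/a$ so that the transversal-fluctuation regions remain disjoint. The constraint $k/a<\varepsilon(a)<1$ in the statement encodes precisely this trade-off, and the three error terms correspond, in order, to the imperfect rescaling of the truncated LPP, the overflow of the extended slow-decorrelation residual, and the overflow of transversal fluctuations.
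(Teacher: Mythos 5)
Your overall architecture is the paper's: translate to LPP via \eqref{LPPTASEP}, get the lower bound from FKG, and for the upper bound pull one endpoint back by $\varepsilon(a)T$ along its characteristic (extended slow decorrelation, paying $F_{\GUE}(-\delta\varepsilon(a)^{-1/3})$ and the $(1-\varepsilon(a))^{1/3}$ rescaling) and then use the transversal fluctuation bounds of \cite{BSS16} to force the two maximizers into disjoint regions when $\varepsilon(a)>k/a$, paying $Ce^{-ck}$. However, your very first reduction contains a genuine error. Under the correspondence $\mathcal{L}=\{(x_n(0)+n,n)\}$, the right block of particles ($-\lfloor aT^{2/3}\rfloor\le n\le 0$, $x_n(0)=-n$) maps to the zero-weight column $\{(0,n):-\lfloor aT^{2/3}\rfloor\le n\le 0\}$, and since a path started at the bottom of a zero-weight column can route through any higher point of it, the effective corner for this group is the \emph{bottom} point $(0,-\lfloor aT^{2/3}\rfloor)$, not $(0,0)$. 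Your identity $L_{\mathcal{L}\to E}=\max(L_{(0,0)\to E},L_{(-\lfloor aT^{2/3}\rfloor,1)\to E})$ is therefore false: it omits paths from the lower column points, which increase the passage time by an amount of order $aT^{2/3}$, far above the $T^{1/3}$ fluctuation scale. Concretely, the mean of $L_{(0,0)\to E}$ is about $T-2aT^{2/3}$, so $\Pb(L_{(0,0)\to E}\le T)\to 1$ rather than to $F_{\GUE}(s)$; with your decomposition the limit would degenerate to a single $F_{\GUE}$ factor and the product structure would be lost. Moreover your assignment of the marginals is swapped relative to the correct one: as in Proposition 3.2 and \eqref{GUEhat} of the paper, the corner $(-\lfloor aT^{2/3}\rfloor,0)$ carries $F_{\GUE}(s)$ and the corner $(0,-\lfloor aT^{2/3}\rfloor)$ carries $F_{\GUE}(s-u2^{4/3})$. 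Once the correct corners $\{(-\lfloor aT^{2/3}\rfloor,0),(0,-\lfloor aT^{2/3}\rfloor)\}$ are used (this is exactly how the paper rewrites the event before rescaling $T=4t+\dots$ and invoking Theorem 2.9), the rest of your argument is the paper's and goes through.

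A secondary point of rigor: saying that \emph{restricted to the event} that the two maximizers stay within $kT^{2/3}$ of their characteristics the two passage times ``become functions of disjoint collections of weights, hence independent'' is not literally correct, since that event is measurable with respect to all the weights. The paper handles this by introducing the auxiliary variables $\tilde L$ obtained by maximizing only over paths that avoid the deterministic guard lines $R_{\pm}(k)$; these are genuinely independent, and one pays the probability (at most $Ce^{-ck}$, via Theorem 3.3) that they differ from the true passage times. You should phrase your disjointness step in this way (or an equivalent device) to make the independence claim exact.
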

 In particular, Theorem \ref{thmtasep} directly  implies  the following:
\begin{equation}
\begin{aligned}\label{astep}
\lim_{ a \to \infty}\lim_{T \to \infty}\Pb \bigg( x_{\lfloor \frac{T}{4}-T^{2/3}\frac{a+\frac{u}{a}}{2}\rfloor}(T)\geq \frac{u}{a}T^{2/3}+&T^{1/3}\frac{(\frac{u}{a}+a)^{2}}{2}-\frac{T^{1/3}}{2^{1/3}}s \bigg)\\&=F_{\GUE}(s)F_{\GUE}(s-u2^{4/3}).
\end{aligned}
\end{equation}

By   taking $u=a \tilde{u}$ in \eqref{astep} such that $ u/a=\tilde{u}$ and then setting $a=0$, one has the usual step initial data  and  the $T \to \infty$ limit in \eqref{astep} gives the $\mathrm{Airy}_2$ process $\mathcal{A}_{2}(\tilde{u})_{\tilde{u} \in \R}$ .  
To recover the shock situation, one should transfer the $T^{1/3}\frac{(\frac{u}{a}+a)^{2}}{2}$ term in the particle number, i.e. consider
\begin{equation}\label{heuri}
\Pb \bigg( x_{\lfloor \frac{T}{4}-T^{2/3}\frac{a+\frac{u}{a}}{2}+T^{1/3}\frac{(\frac{u}{a}+a)^{2}}{4}\rfloor}(T)\geq \frac{u}{a}T^{2/3}-\frac{T^{1/3}}{2^{1/3}}s \bigg).
\end{equation}
To create a macroscopic shock, set, for $\beta \in (0,1),$  $a=\beta T^{1/3},\xi=\frac{u}{2}\frac{\beta-1}{\beta}$, so that \eqref{heuri}
becomes (recall $\tilde{x}_{n}$ from \eqref{shock})
\begin{equation}
\Pb \bigg( \tilde{x}_{\lfloor        T\frac{(1-\beta)^{2}}{4}+\tilde{\xi}T^{1/3}         \rfloor}(T)\geq T^{1/3}u/\beta -\frac{T^{1/3}}{2^{1/3}}s \bigg).
\end{equation} 
Now \eqref{shockb}  implies
\begin{equation}\label{tada}
\lim_{\beta \to 0} \lim_{T \to \infty}\Pb \bigg( \tilde{x}_{\lfloor        T\frac{(1-\beta)^{2}}{4}+\tilde{\xi}T^{1/3}         \rfloor}(T)\geq T^{1/3}u/\beta -\frac{T^{1/3}}{2^{1/3}}s \bigg)=F_{\GUE}(s)F_{\GUE}(s-u2^{4/3}),
\end{equation}
and Corollary \ref{corintro} follows from \eqref{astep} and \eqref{tada}.

Next we state our result for the fluctuations of  \textit{macroscopic} shocks created by two regions of constant density, where the decoupling already happens in the $T \to \infty$ limit.
\begin{thm}\label{shockflat}Consider TASEP with initial data given by $1>\varrho_{1}>\varrho_{2}>0$ and
\begin{equation}
x^{\varrho_1 , \varrho_{2}}_{n}(0)=\begin{cases}
-\lfloor n/\varrho_{1}\rfloor \quad &\mathrm{for} \quad n\leq 0 \\ -\lfloor n/\varrho_{2}\rfloor\quad &\mathrm{for} \quad n>0.
\end{cases}
\end{equation} Then we have with $c_{i}=(1-\varrho_{i})^{-2/3}\varrho_{i}^{1/3},i=1,2$ 
\begin{align}
\lim_{T \to \infty}&\Pb\left(x_{\lfloor \varrho_{1}\varrho_{2}T+\xi T^{1/3}\rfloor}^{\varrho_{1},\varrho_{2}}(T)\geq (1-\varrho_{1}-\varrho_{2})T-sT^{1/3}\right)\\&\label{c2s2}=F_{\GOE}(2^{2/3}(s-\xi/\varrho_{1})c_1)F_{\GOE}(2^{2/3}(s-\xi/\varrho_{2})c_2).
\end{align}
\end{thm}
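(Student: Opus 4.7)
The plan is to translate to LPP via \eqref{LPPTASEP} and exploit the macroscopic separation of the two characteristic directions meeting at the endpoint, ultimately fitting everything into the general decoupling framework of Theorem \ref{GenGen}. Specifically, rewrite the probability in \eqref{c2s2} as $\Pb(L_{\mathcal{L}\to E}\leq T)$ with $E=(n+(1-\varrho_1-\varrho_2)T-sT^{1/3},n)$, $n=\lfloor\varrho_1\varrho_2T+\xi T^{1/3}\rfloor$, and $\mathcal{L}=\mathcal{L}^-\cup\mathcal{L}^+$, where $\mathcal{L}^-$ encodes particles with index $\leq 0$ (spacing $1/\varrho_1$) and $\mathcal{L}^+$ those with index $>0$ (spacing $1/\varrho_2$). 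Writing $L_{\mathcal{L}\to E}=\max(L_{\mathcal{L}^-\to E},L_{\mathcal{L}^+\to E})$ reduces the problem to the joint event $\{L_{\mathcal{L}^-\to E}\leq T,L_{\mathcal{L}^+\to E}\leq T\}$.

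Next I would identify the marginals via flat TASEP. Let $\mathcal{L}^\pm_{\mathrm{ext}}$ denote the full flat lines of densities $\varrho_1,\varrho_2$ extending $\mathcal{L}^\pm$. A saddle-point computation on the limit shape $(\sqrt{m-x_0}+\sqrt{n-y_0})^2$ shows that the characteristic starting point of $L_{\mathcal{L}^-_{\mathrm{ext}}\to E}$ has second coordinate $\varrho_1(\varrho_2-\varrho_1)T<0$ (using $\varrho_1>\varrho_2$), hence lies strictly inside $\mathcal{L}^-$ at macroscopic distance from the junction $n=0$; the symmetric statement holds for $+$. The $\mathcal{O}(T^{2/3})$ transversal fluctuation and starting-point estimates from \cite{BSS16,Pi17,FO17} then give $L_{\mathcal{L}^\pm\to E}=L_{\mathcal{L}^\pm_{\mathrm{ext}}\to E}$ with probability $\to 1$. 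By \eqref{LPPTASEP}, $L_{\mathcal{L}^-_{\mathrm{ext}}\to E}\leq T$ is equivalent to particle $n$ in flat TASEP of density $\varrho_1$ reaching position at least $m-n$; combining \eqref{flatT} with the spatial translation invariance of flat TASEP (to move the reference particle from $\varrho_1(1-\varrho_1)T$ to $\varrho_1\varrho_2T$), this marginal converges to $F_{\GOE}(2^{2/3}c_1(s-\xi/\varrho_1))$, and analogously to $F_{\GOE}(2^{2/3}c_2(s-\xi/\varrho_2))$ for $+$.

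For the product structure, I would apply slow decorrelation. Fix $\nu\in(2/3,1)$ and let $E^\pm$ lie on the characteristic line from $\mathcal{L}^\pm_{\mathrm{ext}}$ to $E$ at distance $T^\nu$ from $E$. The standard slow decorrelation of \cite{CFP10b} gives $T^{-1/3}(L_{\mathcal{L}^\pm_{\mathrm{ext}}\to E}-L_{\mathcal{L}^\pm_{\mathrm{ext}}\to E^\pm}-\mu_\pm T^\nu)\to 0$ in probability for the appropriate deterministic constants $\mu_\pm$. Since the two characteristics come from different quadrants of the plane, $E^+$ and $E^-$ are separated at the macroscopic scale $T^\nu\gg T^{2/3}$; combined with $\mathcal{O}(T^{2/3})$ transversal fluctuations (Theorem 3.2 of \cite{BSS16}), the maximizers $\pi^{\max}_{\mathcal{L}^\pm_{\mathrm{ext}}\to E^\pm}$ use disjoint weight sets with probability $\to 1$. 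Hence $L_{\mathcal{L}^-_{\mathrm{ext}}\to E^-}$ and $L_{\mathcal{L}^+_{\mathrm{ext}}\to E^+}$ are asymptotically independent, and combined with the marginal limits above this yields \eqref{c2s2}.

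The main obstacle is the marginal identification, since \eqref{flatT} is stated for the characteristic particle $\varrho_i(1-\varrho_i)T$ while we need it at the shock particle $\varrho_1\varrho_2T$, an $\mathcal{O}(T)$ shift in the index that pushes the LPP endpoint off the diagonal $m=n$. By (near-)translation invariance of flat TASEP the $F_{\GOE}$ universality still applies, but the prefactors must be rederived from the geometry of the flat line and the endpoint, and this is where the constants $c_i=(1-\varrho_i)^{-2/3}\varrho_i^{1/3}$ enter. Once these marginals are established, the decoupling proceeds by the same mechanism as in \cite{FN14}, only simpler, which is consistent with the paper's own remark that no extended slow decorrelation or refined starting-point control is needed for Theorem \ref{shockflat}.
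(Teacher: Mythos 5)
Your proposal is correct and follows the same skeleton as the paper's proof: pass to LPP, observe that the two characteristic starting points $S_{\varrho_1},S_{\varrho_2}$ (your saddle-point computation reproduces exactly the points in \eqref{Spoint}) are separated by $\mathcal{O}((\varrho_1-\varrho_2)T)$, localize starting points via (4.18) of \cite{FO17}, bound transversal fluctuations via Theorem \ref{trans1}, and use ordinary slow decorrelation at scale $T^{\nu}$, $\nu\in(2/3,1)$, to put the two maximizers into disjoint deterministic sets, yielding the product. The only genuine divergences are in the details. First, the paper moves just one endpoint ($E^{\varrho_2}$ on the $\varrho_2$-characteristic) and lets the $\varrho_1$-maximizer run all the way to $E$, while you move both endpoints $E^{\pm}$; both variants work since the characteristics separate linearly away from $E$. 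Second, and more substantively, for the marginal limits the paper simply invokes Lemma 2.4 of \cite{FGN17} for the half-line-to-point convergence to $F_{\GOE}$ with the constants $c_i$, whereas you rederive the marginals by extending $\mathcal{L}^{\pm}$ to the full flat lines (justified by the starting-point localization, since $S_{\varrho_i}$ sits at macroscopic depth inside the correct half-line), and then apply \eqref{flatT} together with (approximate) translation invariance of flat TASEP to shift the reference particle from $\varrho_i(1-\varrho_i)T$ to $\varrho_1\varrho_2T+\xi T^{1/3}$; this shift computation indeed produces the stated arguments $2^{2/3}(s-\xi/\varrho_i)c_i$, and it is essentially the route the paper itself sketches later for \eqref{you}. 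So your argument is self-contained where the paper cites, at the cost of having to carry out the translation-invariance/constant bookkeeping carefully; otherwise the two proofs buy the same thing.
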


The following is imminent from Theorem \ref{shockflat}.
\begin{cor}
We have that with $\tilde{\xi}=\xi(\varrho_{1}-\varrho_{2})^{-1}$ 
\begin{align}
\lim_{ \varrho_{1}\searrow \varrho_{2}}\lim_{T \to \infty}&\Pb\left(x_{\lfloor \varrho_{1}\varrho_{2}T+\tilde{\xi} T^{1/3}\rfloor}^{\varrho_{1},\varrho_{2}}(T)\geq (1-\varrho_{1}-\varrho_{2})T-(s/c_1+\tilde{\xi}/\varrho_{1})T^{1/3}\right)\\&=F_{\GOE}(2^{2/3}s )F_{\GOE}(2^{2/3}s-2^{2/3}\xi (1-\varrho_{2})^{-2/3}\varrho_{2}^{-5/3}).
\end{align}
\end{cor}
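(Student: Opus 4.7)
Since the corollary is stated as imminent from Theorem \ref{shockflat}, the plan is simply to apply that theorem with the parameters $\xi \mapsto \tilde\xi$ and $s \mapsto s/c_1 + \tilde\xi/\varrho_1$, then simplify algebraically and pass to the limit $\varrho_1 \searrow \varrho_2$ using the continuity of $F_{\GOE}$ and of $c_1,c_2$ in the densities. Note that the event whose probability appears on the left-hand side is, by construction, exactly the event from Theorem \ref{shockflat} after these substitutions, so the inner $T\to\infty$ limit is immediate.

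First I would compute the two $F_{\GOE}$-arguments produced by Theorem \ref{shockflat}. The first argument is
\[
2^{2/3}\bigl(s/c_1+\tilde\xi/\varrho_1-\tilde\xi/\varrho_1\bigr)c_1 \;=\; 2^{2/3}s,
\]
which, crucially, has no $\varrho_1$-dependence at all. The second argument is
\[
2^{2/3}\bigl(s/c_1+\tilde\xi/\varrho_1-\tilde\xi/\varrho_2\bigr)c_2 \;=\; 2^{2/3}\frac{c_2}{c_1}s \;+\; 2^{2/3}\tilde\xi\,\frac{\varrho_2-\varrho_1}{\varrho_1\varrho_2}\,c_2,
\]
and using $\tilde\xi(\varrho_2-\varrho_1)/(\varrho_1\varrho_2) = -\xi/(\varrho_1\varrho_2)$, this simplifies to
\[
2^{2/3}\frac{c_2}{c_1}s - 2^{2/3}\xi\,\frac{c_2}{\varrho_1\varrho_2}.
\]
So after the inner limit one obtains $F_{\GOE}(2^{2/3}s)\,F_{\GOE}\bigl(2^{2/3}(c_2/c_1)s - 2^{2/3}\xi c_2/(\varrho_1\varrho_2)\bigr)$ for each $\varrho_1>\varrho_2$.

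Then I would let $\varrho_1\searrow\varrho_2$. Since $c_i = (1-\varrho_i)^{-2/3}\varrho_i^{1/3}$ is continuous in $\varrho_i$, we have $c_1\to c_2$ and $c_2/c_1 \to 1$, while $c_2/(\varrho_1\varrho_2)\to c_2/\varrho_2^2 = (1-\varrho_2)^{-2/3}\varrho_2^{-5/3}$. Because $F_{\GOE}$ is continuous on $\R$, the limits pass through, giving
\[
F_{\GOE}(2^{2/3}s)\,F_{\GOE}\!\bigl(2^{2/3}s - 2^{2/3}\xi(1-\varrho_2)^{-2/3}\varrho_2^{-5/3}\bigr),
\]
which is exactly the right-hand side of the corollary.

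There is essentially no obstacle here: all heavy lifting is done in Theorem \ref{shockflat}, and the corollary is a routine substitution followed by an elementary algebraic simplification and a continuity argument. The only thing to be mildly careful about is that the substituted $s/c_1+\tilde\xi/\varrho_1$ depends on $\varrho_1$, so one should state the inner limit as a function of $\varrho_1$ before taking $\varrho_1\searrow \varrho_2$ — but since both $s/c_1$ and $\tilde\xi/\varrho_1$ are finite and continuous in $\varrho_1$ down to $\varrho_2$, no uniformity issue arises.
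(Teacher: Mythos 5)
Your proposal is correct and is exactly the argument the paper intends: the paper gives no proof beyond declaring the corollary "imminent from Theorem \ref{shockflat}", and your substitution $\xi\mapsto\tilde\xi$, $s\mapsto s/c_1+\tilde\xi/\varrho_1$, the algebraic simplification of the two $F_{\GOE}$-arguments, and the continuity argument in $\varrho_1\searrow\varrho_2$ fill in precisely that routine computation.
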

Now, to be in a critical scaling,  set  $\varrho_{1}(a)=\varrho_{2}+a T^{-1/3}$ and recall $\tilde{\xi}=\xi(\varrho_{1}-\varrho_{2})^{-1}.$ Then the analogue of Corollary \ref{corintro} is 
\begin{equation}
\begin{aligned}\label{you}
&\lim_{a \to + \infty}\lim_{T \to \infty}\Pb(x_{\lfloor \varrho_{1}(a) \varrho_{2}T+\xi T^{2/3}/a\rfloor}^{\varrho_{1}(a),\varrho_{2}}(T)\geq (1-\varrho_{1}(a)-\varrho_{2})T-\xi T^{2/3}/a\varrho_{2}-s T^{1/3}/c_{2})
\\&=\lim_{ \varrho_{1}\searrow \varrho_{2}}\lim_{T\to \infty}\Pb\left(x_{\lfloor \varrho_{1}\varrho_{2}T+\tilde{\xi} T^{1/3}\rfloor}^{\varrho_{1},\varrho_{2}}(T)\geq (1-\varrho_{1}-\varrho_{2})T-(s/c_1+\tilde{\xi}/\varrho_{1})T^{1/3}\right).\end{aligned}
\end{equation}

We do not provide a full proof of \eqref{you}, but see  the end of Section \ref{othgen}  for a discussion  and outline of proof of \eqref{you}.

\subsection{Decoupling of Last Passage Times}

In the following, we give our results which correspond to several  last passage times decoupling in a certain  double limit.
The first, Theorem \ref{Thmpp}, is the LPP counter part of  Theorem \ref{thmtasep}. By using that various $\mathrm{Airy}$ processes arise as limit in LPP models,  we show how decoupling of last passage times implies decoupling bounds for the $\mathrm{Airy}_1,\mathrm{Airy}_2 $ and $\mathrm{Airy}_{2\to 1}$ processes. The decoupling of the $\mathrm{Airy}_2$ process, reported in Corollary \ref{CorA2}, is a corollary of Theorem \ref{Thmpp}, whereas  the decoupling of the $\mathrm{Airy}_{2\to 1},\mathrm{Airy}_1$  processes require new proofs. Finally, all these decouplings of last passage times fall in the framework of a simple more general statement about decoupling of last passage times, see Theorem \ref{GenGen}. This improves Theorem 2.1 of \cite{FN15}.

\begin{thm}\label{Thmpp}
Set  $\mathcal{L}^{+}=(-\lfloor a t^{2/3}\rfloor ,0), \mathcal{L}^{-}=(0,-\lfloor a t^{2/3}\rfloor ), \mathcal{L}=\mathcal{L}^{+}\cup\mathcal{L}^{-}$  and define 
\begin{equation} 
\begin{aligned}
&\mu^{a}t=4t + 2 t^{2/3}(a+u/a)-\left(a+\frac{u}{a}\right)^{2}t^{1/3}/4.
\end{aligned}
\end{equation}
There are constants $C,c>0$ such that for  $a>k>0$, any $\delta>0$ and $k/a<\varepsilon(a)<1$ we may bound
 \begin{align*}
F_{\GUE}(s)F_{\GUE}\left(s-\frac{u}{2^{4/3}}\right)&\leq  \lim_{t \to \infty}\Pb\left(\frac{L_{\mathcal{L} \to (\lfloor t+\frac{u}{a}t^{2/3}\rfloor,\lfloor t\rfloor )}-\mu^{a}t}{2^{4/3}t^{1/3}}\leq s\right)
\\&\leq F_{\GUE}\left(\frac{s+\delta}{(1-\varepsilon(a))^{1/3}}\right)F_{\GUE}\left(s-\frac{u}{2^{4/3}}\right)\\&+F_{\GUE}(-\delta \varepsilon(a)^{-1/3})+Ce^{-ck}.
\end{align*}

 \end{thm}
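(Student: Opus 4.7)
The starting observation is that $L_{\mathcal{L}\to E}=\max\{L_{\mathcal{L}^+\to E},L_{\mathcal{L}^-\to E}\}$, so
\[
\Pb(L_{\mathcal{L}\to E}\leq X) = \Pb(L_{\mathcal{L}^+\to E}\leq X,\, L_{\mathcal{L}^-\to E}\leq X),
\]
with $X=\mu^a t+2^{4/3}t^{1/3}s$. Both events are decreasing in the i.i.d.\ weights $\{\omega_{i,j}\}$, so the FKG inequality gives the product lower bound. Passing $t\to\infty$ in each factor using the point-to-point convergence \eqref{step} produces the two $F_{\GUE}$ terms claimed; the asymmetric $u/2^{4/3}$ shift arises because the horizontal perturbation $ut^{2/3}/a$ of $E$ affects the direction to $\mathcal{L}^+$ and to $\mathcal{L}^-$ in opposite ways.

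The upper bound is the substantive half. Following the strategy sketched in the introduction, I would introduce an auxiliary point $E^+$ on the characteristic line from $\mathcal{L}^+$ to $E$ at Euclidean distance $\simeq\varepsilon(a)t$ from $E$, and control two ingredients. \emph{Transversal confinement:} by the estimate of \cite{BSS16} cited as Theorem 3.2, outside an event of probability at most $Ce^{-ck}$, each of the maximizers $\pi^{\max}_{\mathcal{L}^+\to E^+}$ and $\pi^{\max}_{\mathcal{L}^-\to E}$ stays in a tube of transversal width $kt^{2/3}$ around its characteristic. The two characteristics from $\mathcal{L}^\pm$ meet at $E$ at an angle of order $at^{-1/3}$, so at distance $\varepsilon(a)t$ back from $E$ they are separated by $\varepsilon(a)\,a\,t^{2/3}$; the hypothesis $k/a<\varepsilon(a)$ is exactly what forces this separation to exceed the tube width $kt^{2/3}$, making the two tubes disjoint. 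On this good event, $L_{\mathcal{L}^+\to E^+}$ and $L_{\mathcal{L}^-\to E}$ depend on disjoint sets of weights and are therefore independent. \emph{Extended slow decorrelation:} for an appropriate deterministic shift $\Delta_\varepsilon$, the difference $L_{\mathcal{L}^+\to E}-L_{\mathcal{L}^+\to E^+}-\Delta_\varepsilon$ is asymptotically a rescaled $F_{\GUE}$ random variable of width $2^{4/3}\varepsilon(a)^{1/3}t^{1/3}$, yielding for any $\delta>0$
\[
\limsup_{t\to\infty}\Pb\bigl(L_{\mathcal{L}^+\to E}>L_{\mathcal{L}^+\to E^+}+\Delta_\varepsilon-2^{4/3}\delta t^{1/3}\bigr)\leq F_{\GUE}(-\delta\varepsilon(a)^{-1/3}).
\]

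Combining these, on the good tube event the joint probability $\Pb(L_{\mathcal{L}^+\to E}\leq X,L_{\mathcal{L}^-\to E}\leq X)$ is bounded above by the product $\Pb(L_{\mathcal{L}^+\to E^+}\leq X-\Delta_\varepsilon+2^{4/3}\delta t^{1/3})\,\Pb(L_{\mathcal{L}^-\to E}\leq X)$, plus the slow-decorrelation error $F_{\GUE}(-\delta\varepsilon(a)^{-1/3})$ and the transversal error $Ce^{-ck}$. As $t\to\infty$ the first factor converges to $F_{\GUE}\bigl((s+\delta)/(1-\varepsilon(a))^{1/3}\bigr)$, since the effective length of a path $\mathcal{L}^+\to E^+$ is $(1-\varepsilon(a))t$ and this contracts the $t^{1/3}$ fluctuation scale by exactly the factor $(1-\varepsilon(a))^{1/3}$; the second factor converges to $F_{\GUE}(s-u/2^{4/3})$ as in the lower bound. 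This yields the claimed upper bound. The principal obstacle is the extended slow decorrelation step: the classical version of \cite{CFP10b} operates at distance $t^\nu$ with $\nu<1$, where the residual fluctuation is $o(t^{1/3})$, whereas here $\varepsilon(a)t$ is genuinely macroscopic and the residual survives as an $\varepsilon(a)^{1/3}F_{\GUE}$ term that vanishes only in the subsequent limit $a\to\infty$. The constraint $k/a<\varepsilon(a)<1$ is precisely the balance that lets the transversal bound and the slow-decorrelation bound cooperate in the eventual $a\to\infty$ double limit.
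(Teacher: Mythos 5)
Your proposal is correct and follows essentially the same route as the paper: FKG for the lower bound, an auxiliary point $E^{+}$ on the characteristic from $\mathcal{L}^{+}$ at distance $\varepsilon(a)t$ from $E$, transversal confinement via the \cite{BSS16} estimate with the condition $\varepsilon(a)>k/a$ forcing disjointness, and the extended slow decorrelation step producing the $F_{\GUE}(-\delta\varepsilon(a)^{-1/3})$ error and the $(1-\varepsilon(a))^{-1/3}$ rescaling. The paper merely packages these ingredients as Assumptions \ref{Assumpt1}--\ref{Assumpt3} of the general Theorem \ref{GenGen} (verified in Propositions \ref{JohConvergence}, \ref{propA2}, \ref{propA3}), which is an organizational rather than a substantive difference.
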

\subsection{Decoupling of $\mathrm{Airy} $ Processes}
Theorem \ref{Thmpp} gives some estimates on the decay of the two point function of the $\mathrm{Airy}_2$ process $ \mathcal{A}_{2}$.  The two point function $\Pb(\mathcal{A}_{2}(0)\leq s_1, \mathcal{A}_{2} (a)\leq s_2)$ has already  been studied in detail (see in particular (7) in \cite{TrWid11}, and also the previous works  \cite{Wid03}, \cite{AvM03}). In particular, it is known that $\Pb(\mathcal{A}_{2}(0)\leq s_1, \mathcal{A}_{2} (a)\leq s_2)=F_{\GUE}(s_1)F_{\GUE}\left(s_2\right)+\mathcal{O}(a^{-2})$ as $a \to \infty$. However, the works \cite{TrWid11},\cite{Wid03}, \cite{AvM03} are all  based on Fredholm determinant (in \cite{Wid03}, \cite{TrWid11}) or PDE expression (in \cite{AvM03})  for the two point function, whereas we use that the   $\mathrm{Airy}_2$ process arises as limit in LPP.
\begin{cor}\label{CorA2} Let $a>k>0.$ 
Then for any $\delta>0$ and $1>\varepsilon(a)>k/a$ we may bound
 \begin{align*}
F_{\GUE}(s)F_{\GUE}\left(s-4u\right)&\leq \Pb\bigg(\mathcal{A}_{2}\left(-a-\frac{u}{a}\right)\leq s,\mathcal{A}_{2}\left(a-\frac{u}{a}\right)\leq s-4u\bigg)\\&\leq F_{\GUE}\left(\frac{s+\delta}{(1-\varepsilon(a))^{1/3}}\right)F_{\GUE}\left(s-4u\right)\\&+F_{\GUE}(-\delta \varepsilon(a)^{-1/3})+Ce^{-ck}.
\end{align*}

\end{cor}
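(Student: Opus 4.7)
The plan is to deduce Corollary \ref{CorA2} from Theorem \ref{Thmpp} by sending $t\to\infty$ and identifying the limit in terms of the two-point function of the Airy$_2$ process. The starting point is the observation that, because $\mathcal{L}=\mathcal{L}^+\cup\mathcal{L}^-$ is the union of just two single points,
\[ L_{\mathcal{L}\to E_t}=\max\{L_{\mathcal{L}^+\to E_t},\,L_{\mathcal{L}^-\to E_t}\}, \]
so the event bounded in Theorem \ref{Thmpp} is the intersection of the two marginal events where each of the two single point-to-point passage times is bounded by $\mu^a t+2^{4/3}t^{1/3}s$.

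Next I would compute the leading-order means of both passage times using the expansion of $(\sqrt{m}+\sqrt{n})^2$. The displacement from $\mathcal{L}^+$ to $E_t$ is $(t+(a+u/a)t^{2/3},t)$, whose mean matches $\mu^a t$ up to $o(t^{1/3})$. The displacement from $\mathcal{L}^-$ to $E_t$ is $(t+(u/a)t^{2/3},t+at^{2/3})$, and a direct computation gives mean $\mu^a t+u\,t^{1/3}+o(t^{1/3})$, with the difference coming from the $(a+u/a)^2-(a-u/a)^2=4u$ arising in the $N^{1/3}$ correction. After dividing by $2^{4/3}t^{1/3}$, this extra shift produces exactly the offset $u/2^{4/3}$ in the second $F_{\GUE}$ argument of Theorem \ref{Thmpp}.

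Then I would invoke the joint convergence of rescaled point-to-point exponential LPP from two starting points at $\Theta(t^{2/3})$ separation to the Airy$_2$ process. After centering each passage time by its own mean and scaling by $2^{4/3}t^{1/3}$, one obtains joint convergence in distribution of the pair to $(\mathcal{A}_2(\alpha_+),\mathcal{A}_2(\alpha_-))$ where $\alpha_+$ and $\alpha_-$ are the rescaled positions of $\mathcal{L}^+$ and $\mathcal{L}^-$ on the Airy$_2$ spatial axis. With the scaling convention under which the marginal of $\mathcal{A}_2$ is $F_{\GUE}$, and after matching constants so that $\alpha_+=-a-u/a$ and $\alpha_-=a-u/a$, the joint event becomes $\{\mathcal{A}_2(-a-u/a)\leq s,\,\mathcal{A}_2(a-u/a)\leq s-4u\}$ in the limit.

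Passing to the limit $t\to\infty$ in Theorem \ref{Thmpp} and using continuity of the distribution functions then transfers the chain of inequalities directly to the Airy$_2$ side, yielding Corollary \ref{CorA2}. The main obstacle will be the careful reconciliation of the scaling constants so that the Airy$_2$ spatial arguments come out equal to $\pm(a\pm u/a)$ (rather than rescaled versions) while simultaneously producing the $4u$ offset inside the second $F_{\GUE}$; this is bookkeeping using the standard correspondence between exponential-LPP parameters and the Airy$_2$ coordinate, not a conceptual difficulty.
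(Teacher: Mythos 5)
Your proposal is correct and takes essentially the same route as the paper: Theorem \ref{Thmpp} provides the sandwich bounds, the $t\to\infty$ limit is identified with the two-point probability of the Airy$_2$ process, and a change of variables in $(a,u,s)$ removes the $2^{-5/3}$ and $2^{-4/3}$ scaling factors to give the stated form. The paper packages the identification step as Lemma \ref{airy22}, proved by exchanging $\mathcal{L}$ with the endpoint and combining the horizontal-line Airy$_2$ convergence of \cite{FO17} with a slow decorrelation argument (the two reflected endpoints do not lie on one horizontal line), which is the only place where your appeal to ``standard joint convergence'' glosses over a small extra argument.
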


In Section \ref{othgen}, we also study  the  decay of the joint distribution of the $\mathrm{Airy}_1,\mathrm{Airy}_{2 \to 1}$ processes, which we denote by $\mathcal{A}_1, \mathcal{A}_{2 \to 1}$. This  decoupling does not correspond to a transition to shock fluctuations, rather   one has  two maximizers which start and end in points with distance $a t^{2/3}.$ The starting point is random, and controlling it is an  extra ingredient required here, which was obtained in recently in \cite{FO17},\cite{Pi17}. The result we obtain is as follows.

 \begin{thm}\label{airydec} There are  $C,c,a_0>0$ such that for  $a > a_0,b\in \R$  we have 
 \begin{equation}
\begin{aligned}\label{decouplage}
F_{\GOE}(2s_1)F_{\GOE}(2s_2)&\leq\Pb( \mathcal{A}_1 (0)\leq s_1, \mathcal{A}_1 (a)\leq s_2)\\&\leq F_{\GOE}(2s_1)F_{\GOE}(2s_2) + Ce^{-ca}
\end{aligned} 
\end{equation}and  that for the $\mathrm{Airy}_{2\to 1}$  process we may bound 
\begin{align}
&\Pb(\mathcal{A}_{2\to 1}(b)\leq s_1)  \Pb(\mathcal{A}_{2\to 1}(|b|+a)\leq s_2)   \\&   \leq 
\Pb( \mathcal{A}_{2\to1} (b)\leq s_1, \mathcal{A}_{2\to1} (|b|+a)\leq s_2)
\\&\leq \Pb(\mathcal{A}_{2\to 1}(b)\leq s_1)  \Pb(\mathcal{A}_{2\to 1}(|b|+a)\leq s_2)         
+ Ce^{-ca}.
\label{decouplage2}
\end{align}

\end{thm}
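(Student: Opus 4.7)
The plan is to transfer the problem to LPP and show that the two maximizing paths can be confined to deterministic disjoint tubes with probability at least $1-Ce^{-ca}$, so that the corresponding LPP values are asymptotically independent. The lower bounds in \eqref{decouplage} and \eqref{decouplage2} follow immediately from the FKG inequality: each $\mathrm{Airy}$ statistic arises as a limit of a line-to-point LPP time $L_{\mathcal{L}\to E}$, and for fixed $E$ the event $\{L_{\mathcal{L}\to E}\leq\tau\}$ is decreasing in the i.i.d.\ $\mathrm{Exp}(1)$ weights $\omega_{i,j}$. Since both events in each joint distribution are decreasing in the same weight field and $\mathcal{L}$ is common, FKG yields positive correlation at finite $t$, which passes to the $\mathrm{Airy}$ limit.

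For the upper bound in \eqref{decouplage}, I would realize $\mathcal{A}_1(\cdot)$ as the scaling limit of $L_{\mathcal{L}\to E_u}$, where $\mathcal{L}$ is the anti-diagonal line and $E_u=(\lfloor t+ut^{2/3}\rfloor,\lfloor t-ut^{2/3}\rfloor)$, and consider $u\in\{0,a\}$. The characteristic straight segment from $\mathcal{L}$ to $E_u$ meets $\mathcal{L}$ near $(ut^{2/3},-ut^{2/3})$, so the two characteristics for $u=0$ and $u=a$ are parallel and separated by a distance of order $at^{2/3}$ along their entire length. Two ingredients then enter: the transversal-fluctuation tail of \cite{BSS16} (used as Theorem 3.2) gives stretched-exponential decay for the probability that $\pi^{\max}_{\mathcal{L}\to E_u}$ deviates from its characteristic by more than $kt^{2/3}$, and the starting-point control of \cite{Pi17,FO17} gives an analogous tail for the random point on $\mathcal{L}$ where $\pi^{\max}_{\mathcal{L}\to E_u}$ originates. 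Choosing $k$ of order $a/4$, with probability at least $1-Ce^{-ca}$ the maximizer to $E_0$ lives inside a deterministic tube $T_0$ of width $at^{2/3}/2$ around its characteristic, and the maximizer to $E_a$ inside a disjoint tube $T_a$.

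Introduce the tube-restricted LPP values $L^{T_i}=\max_{\pi\subset T_i}\sum_{(p,q)\in\pi}\omega_{p,q}$. By disjointness of $T_0,T_a$, the pair $(L^{T_0},L^{T_a})$ is independent. Writing $B_i=\{\pi^{\max}_{\mathcal{L}\to E_i}\subset T_i\}$, one has $L_i=L^{T_i}$ on $B_i$ with $\Pb(B_i^c)\leq Ce^{-ca}$, so the standard inclusion
\[
\Pb(L_0\leq\tau_1,L_a\leq\tau_2)\leq \Pb(B_0^c)+\Pb(B_a^c)+\Pb(L^{T_0}\leq\tau_1)\Pb(L^{T_a}\leq\tau_2),
\]
combined with $\Pb(L^{T_i}\leq\tau)\leq\Pb(L_i\leq\tau)+\Pb(B_i^c)$, yields $\Pb(L_0\leq\tau_1,L_a\leq\tau_2)\leq\Pb(L_0\leq\tau_1)\Pb(L_a\leq\tau_2)+Ce^{-ca}$. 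Passing to the LPP limit via joint convergence of the rescaled pair to $(\mathcal{A}_1(0),\mathcal{A}_1(a))$ gives \eqref{decouplage}. The case \eqref{decouplage2} is structurally identical, with $\mathcal{L}$ replaced by the step-flat half-line; the identity $(|b|+a)-b\geq a$ ensures the characteristics are separated by at least $at^{2/3}$ uniformly in $b$, and when $b<0$ the two endpoints sit in geometrically different regimes (step vs.\ flat), which only makes the tube separation easier to achieve.

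The main obstacle will be verifying, uniformly in the spatial parameter, a starting-point tail of the required stretched-exponential form for the (half-)flat line-to-point maximizer. The bounds of \cite{Pi17,FO17} cover the standard endpoint but need to be propagated along the transversal direction and, in the $\mathrm{Airy}_{2\to 1}$ case, tracked across the step/flat transition so that the constants hidden in $Ce^{-ca}$ do not blow up. A secondary, standard, check is that the pre-limit decoupling survives the $t\to\infty$ limit, which uses tightness of the rescaled LPP processes together with the marginal convergence to $\mathrm{Airy}_1$ and $\mathrm{Airy}_{2\to 1}$ already invoked in the statement.
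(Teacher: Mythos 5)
Your proposal follows essentially the same route as the paper's proof: FKG for the lower bounds, and for the upper bounds the LPP representation with starting-point localization from \cite{Pi17},\cite{FO17} combined with the transversal-fluctuation bounds of \cite{BSS16}, confining the two maximizers to disjoint deterministic regions (with failure probability $Ce^{-ca}$) so that the restricted passage times are independent, then passing to the $\mathrm{Airy}_1$ and $\mathrm{Airy}_{2\to 1}$ limits via \cite{BF07},\cite{BFS07}; the paper packages exactly this via Theorem \ref{GenGen}, with the slow-decorrelation assumption trivially satisfied ($c_{\varepsilon(a)}=1$, $G_0^a=\mathbf{1}_{[0,\infty)}$). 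The uniformity obstacle you flag is handled there by translation invariance of the full-line problem and, in the half-line $\mathrm{Airy}_{2\to 1}$ case, by a simple coupling comparing the half-line maximizer's starting point with that of the full-line problem before invoking (4.18) of \cite{FO17}.
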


\subsection{ Decoupling in the time-like direction}
Finally, as the simplest example of this paper, we show the decoupling of last passage percolation times along the time-like direction. Denote   for $x, y \in \R$ the points 
$P(x,y)=(\lfloor -  y (xt)^{2/3}\rfloor ,0)$
 and   $\mu(x,y)t=4xt-2y(xt)^{2/3}+\frac{y^{2}}{4}(xt)^{1/3}$ and denote
\begin{equation}
L_{P(x,y)\to (\lfloor xt\rfloor ,\lfloor xt\rfloor )}^{\mathrm{resc}}=\frac{ L_{P(x,y)\to (\lfloor xt\rfloor ,\lfloor xt\rfloor )}-\mu(x,y)t}{2^{4/3}(xt)^{1/3}}
\end{equation}
  For e.g.  points lying on a line with slope $1$, the decoupling we consider  corresponds to look for $\tau<a$ 
at 
\begin{equation}\label{twotimeex}
\lim_{t \to \infty}\Pb\left(\{L^{\mathrm{resc}}_{0\to (\lfloor \tau t\rfloor ,\lfloor \tau t\rfloor )}\leq s\}\cap  \{L^{\mathrm{resc}}_{0\to (\lfloor at\rfloor ,\lfloor at\rfloor )}\leq \zeta\}\right)
\end{equation}
and then let $a$ go to infinity. It is a priori not clear if \eqref{twotimeex} exists, hence we work with 
an arbitrary subsequential limit in \eqref{subseql}.
For the case of brownian  and geometric percolation, Johansson  proved in  \cite{Jo17}, \cite{Jo18} an explicit formula for \eqref{twotimeex}, see also \cite{BL18}  for results in  periodic TASEP, and \cite{FS16} for the decay of the covariance in the time-like direction. In  \cite{Jo17}, the author expects (see Remark 2.3 in \cite{Jo17}) that (the analogue of) \eqref{twotimeex} converges to $F_{\GUE}(s)F_{\GUE}(\zeta)$ as $a \to \infty$ and  notes that this can be checked heuristically but that it appears rather subtle. Here we show that a soft probabilistic argument suffices to show this decoupling, which extends   to the  multipoint two time distribution in exponential LPP, and even to provide some (non-optimal) bounds on the speed of decoupling. Note that the following  Theorem implies in particular that
\begin{align*}
&\lim_{a \to \infty}\lim_{ t_j \to \infty}\Pb\left( \bigcap_{i=1}^{l}\{L^{\mathrm{resc}}_{P(\tau,r_i)\to (\lfloor \tau t_j\rfloor ,\lfloor \tau t_j\rfloor )}\leq s_i\}\cap  \bigcap_{i=1}^{k}\{L^{\mathrm{resc}}_{P(a,u_i)\to (\lfloor at_j\rfloor ,\lfloor at_j \rfloor )}\leq \zeta_i\}\right)\\&=\Pb\left(\bigcap_{i=1}^{l}\mathcal{A}_{2}(r_i)\leq s_i\right)\Pb\left(\bigcap_{i=1}^{k}\mathcal{A}_{2}(u_i)\leq \zeta_i\right).
\end{align*}

\begin{thm}\label{twotime}
Let $a>\tau>0,$ and let the  $\{\omega_{i,j},i,j \in \Z\}$ be i.i.d. $\exp(1)$ distributed. Let $r_1 < \cdots < r_l$ and $u_1 < \cdots <u_k.$ Denote by $\lim_{ t_j \to \infty}$ an arbitrary subsequential limit. Then for any $\delta >0$ 
\begin{align}\label{2time}
&\Pb\left(\bigcap_{i=1}^{l}\mathcal{A}_{2}(r_i)\leq s_i\right)\Pb\left(\bigcap_{i=1}^{k}\mathcal{A}_{2}(u_i)\leq \zeta_i\right)
\\&\leq \lim_{ t_j \to \infty}\Pb\left( \bigcap_{i=1}^{l}\{L^{\mathrm{resc}}_{P(\tau,r_i)\to (\lfloor \tau t_j\rfloor ,\lfloor \tau t_j\rfloor )}\leq s_i\}\cap  \bigcap_{i=1}^{k}\{L^{\mathrm{resc}}_{P(a,u_i)\to (\lfloor at_j\rfloor ,\lfloor at_j \rfloor )}\leq \zeta_i\}\right)\label{subseql}
\\&\leq
 \Pb\left(\bigcap_{i=1}^{l}\mathcal{A}_{2}(r_i)\leq s_i\right)\Pb\left(\bigcap_{i=1}^{k}\mathcal{A}_{2}(u_i (1-\tau/a)^{1/3})\leq (\zeta_i+\delta)\frac{a^{1/3}}{(a-\tau)^{1/3}} \right)\\&+k F_{\GUE}(-\delta a^{1/3}\tau^{-1/3}).
\end{align}
\end{thm}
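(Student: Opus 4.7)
The lower bound in \eqref{2time} is immediate from the FKG inequality: each event of the form $\{L^{\mathrm{resc}}_{P(x,y)\to(xt,xt)}\le c\}$ is a decreasing function of the i.i.d.\ weights $\{\omega_{i,j}\}$, hence the intersection over the ``small'' group (those with $x=\tau$) and the intersection over the ``large'' group (those with $x=a$) are each decreasing events, and FKG yields
\[
\Pb(\text{small}\cap\text{large})\ \ge\ \Pb(\text{small})\,\Pb(\text{large}).
\]
Taking $t_j\to\infty$ along the given subsequence and using the known marginal convergence $L^{\mathrm{resc}}_{P(x,r)\to(xt,xt)}\Rightarrow\mathcal{A}_2(r)$ gives $\Pb(\bigcap_i\mathcal{A}_2(r_i)\le s_i)\,\Pb(\bigcap_i\mathcal{A}_2(u_i)\le\zeta_i)$ as a lower bound for any subsequential limit.

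For the upper bound, the plan is to split each ``large'' LPP at the horizontal level $y=\tau t$ so as to isolate a piece that is independent of all weights used by the ``small'' LPPs. For each $i$, let $z_i$ be the lattice point closest to the characteristic line of $L_{P(a,u_i)\to(at,at)}$ at height $\tau t$, explicitly $z_i\approx(\tau t-u_i(1-\tau/a)a^{2/3}t^{2/3},\,\tau t)$. Concatenating paths through $z_i$ gives the deterministic inequality
\[
L_{P(a,u_i)\to(at,at)}\ \ge\ L_{P(a,u_i)\to z_i}+L_{z_i\to(at,at)}-\omega_{z_i},
\]
from which, for any threshold $\alpha_i$,
\[
\{L_{P(a,u_i)\to(at,at)}\le\tilde\zeta_i\}\ \subseteq\ \{L_{P(a,u_i)\to z_i}\le\alpha_i\}\cup\{L_{z_i\to(at,at)}\le\tilde\zeta_i-\alpha_i+\omega_{z_i}\}.
\]
Write the two events on the right as $B_i$ and $C_i$. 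Intersecting over $i=1,\ldots,k$ and using $\bigcap_i(B_i\cup C_i)\subseteq(\bigcup_i B_i)\cup(\bigcap_i C_i)$, and then taking the probability jointly with the small LPP event $S=\bigcap_i\{L^{\mathrm{resc}}_{P(\tau,r_i)\to(\tau t,\tau t)}\le s_i\}$, gives
\[
\Pb\bigl(S\cap\textstyle\bigcap_iA_i\bigr)\ \le\ \sum_i\Pb(B_i)+\Pb\bigl(S\cap\textstyle\bigcap_iC_i\bigr).
\]
I will tune $\alpha_i$ to equal the leading-order LLN value of $L_{P(a,u_i)\to z_i}$ shifted down by the appropriate constant multiple of $\delta\,(\tau t)^{1/3}$, calibrated so that the Airy$_2$ limit of $L_{P(a,u_i)\to z_i}$ (a point-to-point LPP of effective time $\tau$) gives $\Pb(B_i)\to F_{\GUE}(-\delta a^{1/3}\tau^{-1/3})$; summing over $i$ produces the stated error term $kF_{\GUE}(-\delta a^{1/3}\tau^{-1/3})$.

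The decoupling occurs in the second summand. Each $C_i$ involves only $L_{z_i\to(at,at)}$ (which, being an up-right LPP from $z_i$, depends only on weights with $y\ge\tau t$) together with the single weight $\omega_{z_i}$; these are independent of the weights used by $S$, which live in $y\le\tau t$. (The only potentially shared site $(\tau t,\tau t)$ carries an $\exp(1)$ weight of order $O(1)$, negligible at the $t^{1/3}$ scale, and may be absorbed into $\delta$.) Consequently $\Pb(S\cap\bigcap_iC_i)=\Pb(S)\,\Pb(\bigcap_iC_i)$. The first factor converges to $\Pb(\bigcap_i\mathcal{A}_2(r_i)\le s_i)$. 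For the second factor, each $L_{z_i\to(at,at)}$ runs over an effective ``time'' $(a-\tau)$, so its native fluctuation scale is $2^{4/3}((a-\tau)t)^{1/3}$; renormalizing by the prescribed $2^{4/3}(at)^{1/3}$ introduces the multiplicative factor $(1-\tau/a)^{1/3}$, which is exactly what produces both the rescaled argument $u_i(1-\tau/a)^{1/3}$ of $\mathcal{A}_2$ and the rescaled threshold $(\zeta_i+\delta)\,a^{1/3}/(a-\tau)^{1/3}$ appearing in the theorem. The principal technical difficulty is not the probabilistic structure — that is ``soft'', requiring no slow-decorrelation bound in the usual $\nu<1$ sense, nor any refined transversal-fluctuation control — but rather the bookkeeping of Johansson scaling constants: verifying that the Airy$_2$ limits of $L_{P(a,u_i)\to z_i}$ and of $L_{z_i\to(at,at)}$ have the correct location and scale so that the threshold $\alpha_i$ can be matched to $F_{\GUE}(-\delta a^{1/3}\tau^{-1/3})$ and the second factor comes out with the precise rescaling displayed in the theorem.
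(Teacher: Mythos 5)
Your overall strategy is exactly the paper's: split each long LPP time at the level of the short ones through the point where its characteristic line crosses that level, control the lower tail of the bottom piece by a union bound giving the error term $kF_{\GUE}(-\delta a^{1/3}\tau^{-1/3})$, factorize the remaining probability by independence, and read off the modified scaling $(1-\tau/a)^{1/3}$ for the top piece. Your $z_i$ has the same $x$-coordinate as the paper's splitting point $P_2(u_i)$, and your bookkeeping of the Johansson constants is the right (and only) computational work left.

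However, there is a genuine flaw at the one step that carries the whole decoupling: you place $z_i$ in row $\lfloor\tau t\rfloor$, whereas the paper takes $P_2(u_i)$ at height $\lfloor\tau t+1\rfloor$. With your choice, $L_{z_i\to(\lfloor at\rfloor,\lfloor at\rfloor)}$ is a function of the weights in rows $y\ge\tau t$, while the small LPP times $L_{P(\tau,r_i)\to(\lfloor\tau t\rfloor,\lfloor\tau t\rfloor)}$ use weights in rows $y\le\tau t$; these collections are \emph{not} disjoint. Concretely, whenever $u_i>0$ the admissible paths for $L_{z_i\to(\lfloor at\rfloor,\lfloor at\rfloor)}$ and those for the small LPPs share all sites $(x,\lfloor\tau t\rfloor)$ with $z_i^x\le x\le\tau t$, i.e.\ of order $t^{2/3}$ common weights, not just the single site $(\lfloor\tau t\rfloor,\lfloor\tau t\rfloor)$ as your parenthetical asserts. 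Hence the identity $\Pb(S\cap\bigcap_i C_i)=\Pb(S)\,\Pb(\bigcap_i C_i)$, which is the crux of the upper bound, is unjustified as written; arguing that this correlation is negligible would require extra input (e.g.\ transversal fluctuation control), defeating the softness of the argument. The repair is immediate and is what the paper does: take the splitting point one row higher, at height $\lfloor\tau t\rfloor+1$, so that the top pieces are measurable with respect to rows $y\ge\tau t+1$ and are exactly independent of the small LPPs; the concatenation inequality, the tuning of $\alpha_i$, and the limiting constants go through verbatim (the single-site correction $\omega_{z_i}$ is then also harmless, being $o(t^{1/3})$ with probability tending to one).
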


\subsection{General Theorem}
The preceding results can all be phrased in a simple Theorem about a general LPP model, which improves the general framework given in Theorem 2.1 of \cite{FN14}.
Let $\mathcal{L}^{+}, \mathcal{L}^{-}\subseteq \Z^{2}$  and let $\{\omega_{i,j},i,j \in \Z\}$ be independent exponentially distributed  weights.  We make three assumptions on our model.

\begin{assumption}\label{Assumpt1}
Let $t,a>0$ and assume there are $E_1 =E_1 (t,a), E_2 =E_2 (t,a) \in \Z^{2}$ 
and $\mu_{1}^{a},\mu_{2}^{a}>0$ such that
\begin{align}\label{eq4a}
& \lim_{t \to \infty}\Pb\left(\frac{L_{\mathcal{L}^{+}\to E_1}-\mu_1^{a} t}{t^{1/3}}\leq s\right)=G_{1}^{a}(s)
\\& \label{eq4b} \lim_{t \to \infty}\Pb\left(\frac{L_{\mathcal{L}^{-}\to E_2}-\mu_2^{a} t}{t^{1/3}}\leq s\right)=G_{2}^{a}(s),
\end{align}
where $G_{1}^{a}(s), G_{2}^{a}(s)$ are some distribution functions.
\end{assumption}
In Theorem \ref{Thmpp}, $G_{1}^{a}, G_{2}^{a}$ will be (shifted) $F_\GUE$ distributions, in Theorem \ref{airydec}, they will be $F_\GOE$ distributions.

\begin{assumption}\label{Assumpt2}
Assume there is a point $E^{+}=E_1 -(\kappa \varepsilon(a)t+d t^{2/3},\varepsilon(a)t)$ with $\kappa,\varepsilon(a)\geq 0,d \in \R$ such that for a $\mu^{\varepsilon(a)}\geq 0$ we have  
\begin{align}
&\lim_{t \to \infty}\Pb\left(\frac{L_{E^{+}\to E_1}-\mu^{\varepsilon(a)}t}{t^{1/3}}\leq s \right)=G_{0}^{a}(s)
\\&\lim_{t \to \infty}\Pb\left(\frac{L_{\mathcal{L}^{+}\to E^{+}}+\mu^{\varepsilon(a)}t-\mu_{1}^{a}t }{t^{1/3}}\leq s\right)=G_{1}^{a}(c_{\varepsilon(a)}s)
\end{align}
where $G_{0}^{a}$ is a distribution function, $c_{\varepsilon(a)}$ is a constant  and $G_{1}^{a}$ is from Assumption \ref{Assumpt1}.
\end{assumption}
In the context of Theorem \ref{Thmpp}, we will take $\varepsilon(a)>0,\lim_{ a \to \infty}\varepsilon(a)=0.$ Then, with $E$ as in Theorem \ref{Thmpp}, $\frac{L_{E^{+}\to E}-\mu^{\varepsilon(a)}t}{t^{1/3}}$ will vanish in the double limit $\lim_{a \to \infty}\lim_{t \to \infty}.$ \begin{assumption}\label{Assumpt3}
Assume there are independent random variables $\tilde{L}_{\mathcal{L}^{+}\to E^{+}},\tilde{L}_{\mathcal{L}^{-}\to E_2}$ such that for some $\tilde{\psi} \geq 0$ 
\begin{align}
\limsup_{t \to \infty}\Pb\left(\{\tilde{L}_{\mathcal{L}^{+}\to E^{+}}\neq L_{\mathcal{L}^{+}\to E^{+}}\} \cup\{ \tilde{L}_{\mathcal{L}^{-}\to E_2}\neq L_{\mathcal{L}^{-}\to E_2}\}\right)\leq \tilde{\psi}.
\end{align}
\end{assumption}
In Theorem \ref{Thmpp}, $\tilde{L}_{\mathcal{L}^{+}\to E^{+}},\tilde{L}_{\mathcal{L}^{-}\to E}$ will be last passage times with restricted transversal fluctuations,
in Theorem \ref{airydec} they will additionally have restricted starting points.

We denote
by 
\begin{equation}
L_{\mathcal{L}^{+}\to E_1}^{\mathrm{resc}}=\frac{L_{\mathcal{L}^{+}\to E_1}-\mu_{1}^{a}t}{t^{1/3}}
\end{equation}
and similarly denote by $L_{\mathcal{L}^{+}\to E_1}^{\mathrm{resc}},L_{E^{+}\to E_1}^{\mathrm{resc}} $ the LPP times rescaled as in Assumptions \ref{Assumpt1},\ref{Assumpt2}.
\begin{thm}\label{GenGen}
Under Assumptions \ref{Assumpt1},\ref{Assumpt2},\ref{Assumpt3} we have for any $\delta \geq 0$ 
\begin{align*}
G_{1}^{a}(s_1)G_{2}^{a}(s_2)&\leq \lim_{t_k \to \infty}\Pb(L_{\mathcal{L}^{+}\to E_1}^{\mathrm{resc}}\leq s_1, L_{\mathcal{L}^{-}\to E_2}^{\mathrm{resc}}\leq s_2)
\\& \leq G_{1}^{a}((s_1+\delta)c_{\varepsilon(a)})G_{2}^{a}(s_2)+G_{0}^{a}(-\delta)+3 \tilde{\psi},
\end{align*}
where $\lim_{t_k \to \infty}$ is any subsequential limit.
\end{thm}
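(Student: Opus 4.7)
The plan splits into a straightforward lower bound via FKG and an upper bound that combines super-additivity through the intermediate point $E^+$, a decorrelation gap bounded by $\delta$, and the independence swap from Assumption \ref{Assumpt3}. For the lower bound, both $\{L_{\mathcal{L}^+\to E_1}^{\mathrm{resc}} \leq s_1\}$ and $\{L_{\mathcal{L}^-\to E_2}^{\mathrm{resc}} \leq s_2\}$ are decreasing events in the independent weights $\{\omega_{i,j}\}$, so FKG gives
\[
\Pb(L_{\mathcal{L}^+\to E_1}^{\mathrm{resc}} \leq s_1,\, L_{\mathcal{L}^-\to E_2}^{\mathrm{resc}} \leq s_2) \geq \Pb(L_{\mathcal{L}^+\to E_1}^{\mathrm{resc}} \leq s_1)\,\Pb(L_{\mathcal{L}^-\to E_2}^{\mathrm{resc}} \leq s_2),
\]
and Assumption \ref{Assumpt1} identifies the subsequential limit of the right-hand side as $G_1^a(s_1)G_2^a(s_2)$.

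For the upper bound, since $E^+$ lies componentwise below $E_1$, LPP super-additivity yields $L_{\mathcal{L}^+\to E_1} \geq L_{\mathcal{L}^+\to E^+} + L_{E^+\to E_1} - \omega_{E^+}$, with the vertex-weight correction being $o(t^{1/3})$ in probability and harmlessly absorbable into $\delta$. Setting $A := (L_{\mathcal{L}^+\to E^+}+\mu^{\varepsilon(a)}t-\mu_1^a t)/t^{1/3}$ and $B := (L_{E^+\to E_1}-\mu^{\varepsilon(a)}t)/t^{1/3}$, one obtains
\[
\{L_{\mathcal{L}^+\to E_1}^{\mathrm{resc}} \leq s_1\} \subseteq \{A \leq s_1+\delta\} \cup \{B < -\delta\}.
\]
Intersecting with $\{L_{\mathcal{L}^-\to E_2}^{\mathrm{resc}} \leq s_2\}$ and union-bounding, the $\{B<-\delta\}$ piece contributes $G_0^a(-\delta)$ in the limit by Assumption \ref{Assumpt2}, and it remains to bound $\Pb(A \leq s_1+\delta,\, L_{\mathcal{L}^-\to E_2}^{\mathrm{resc}} \leq s_2)$.

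Here Assumption \ref{Assumpt3} enters: I swap $(L_{\mathcal{L}^+\to E^+}, L_{\mathcal{L}^-\to E_2})$ for the independent surrogate pair $(\tilde L_{\mathcal{L}^+\to E^+}, \tilde L_{\mathcal{L}^-\to E_2})$ at an additive cost $\tilde\psi_t$, which factorizes the swapped joint probability as $\Pb(\tilde A \leq s_1+\delta)\,\Pb(\tilde L_{\mathcal{L}^-\to E_2}^{\mathrm{resc}} \leq s_2)$. Using $\Pb(\tilde A \leq s_1+\delta) \leq \Pb(A \leq s_1+\delta) + \tilde\psi_t$ and the analogous bound for the $\mathcal{L}^-$ marginal, and in each cross-term immediately dropping the remaining factor by $\Pb(\cdot) \leq 1$, the product is bounded by $\Pb(A \leq s_1+\delta)\,\Pb(L_{\mathcal{L}^-\to E_2}^{\mathrm{resc}} \leq s_2) + 2\tilde\psi_t$. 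Passing to the subsequential limit via Assumptions \ref{Assumpt1} and \ref{Assumpt2}, then combining with the joint swap cost $\tilde\psi$ and the $G_0^a(-\delta)$ contribution, yields the claimed bound with exactly $3\tilde\psi$.

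The main obstacle is the bookkeeping of the three $\tilde\psi$ contributions: a naive expansion of $(G_1^a+\tilde\psi)(G_2^a+\tilde\psi)$ produces $\tilde\psi^2$ and a fourth $\tilde\psi$. Applying the marginal swap bound on one factor only \emph{after} first reducing the other via $\Pb(\cdot) \leq 1$ keeps the total error at $3\tilde\psi$ rather than $4\tilde\psi$. No other step is delicate --- only FKG, LPP super-additivity, one-point convergence, and the independent surrogate pair are used.
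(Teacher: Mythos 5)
Your proposal is correct and follows essentially the same route as the paper: FKG for the lower bound, LPP super-additivity through $E^{+}$ with the event split at $\{L_{E^{+}\to E_1}^{\mathrm{resc}}\leq -\delta\}$, and the swap to the independent surrogate pair of Assumption \ref{Assumpt3}, with the $\Pb(\cdot)\leq 1$ reduction giving exactly the $3\tilde{\psi}$ error term. Your explicit handling of the double-counted vertex weight $\omega_{E^{+}}$ is a minor refinement the paper leaves implicit, but the argument is otherwise the same.
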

Clearly, a version of Theorem \ref{GenGen} without taking the $t_k \to \infty$ limit also holds.
This could be used to refine the results of \cite{FN14} by obtaining upper and lower bounds  for finite $t$ in Theorem 2.1 in \cite{FN14} and its applications, instead of showing only the  convergence to a product as $t \to \infty$.

\section{Proof for point-to-point problems and Theorem \ref{GenGen}}\label{proofsec}

In this section, we prove the results which involve point(s)-to-point LPP problems, as well as the general Theorem \ref{GenGen} .
Corollary \ref{corintro}, Theorems  \ref{thmtasep},  \ref{Thmpp} and Corollary \ref{CorA2} are proved in Section \ref{mainmain}, Theorem \ref{twotime} and 
 Theorem \ref{GenGen} are proved in Section \ref{222}.

\subsection{Proof of Theorems \ref{thmtasep}, \ref{Thmpp} and Corollaries \ref{corintro}, \ref{CorA2}}\label{mainmain}
The proof of Corollary \ref{corintro} is immediate.
\begin{proof}[Proof of Corollary \ref{corintro}]
It follows from \eqref{astep} and  \eqref{tada}.
\end{proof}
Let us recall   the following result for point-to-point LPP.
\begin{prop}[Theorem 1.6 of~\cite{Jo00b}, Theorem 2 of \cite{BP07}]\label{JohConvergence}
Let $0<\eta<\infty, \eta=\eta_{0}+c\ell^{-1/3}$. Then,
\begin{equation} \begin{aligned}
\lim_{\ell\to\infty}\Pb\left(L_{0\to (\lfloor\eta\ell\rfloor,\lfloor\ell\rfloor)}\leq \mu_{\rm pp}\ell +s \sigma_\eta \ell^{1/3}\right)= F_\GUE(s)
\end{aligned}\end{equation}
where $\mu_{\rm pp}=(1+\sqrt{\eta})^2$, and $\sigma_\eta=\eta^{-1/6}(1+\sqrt{\eta})^{4/3}$. In particular, with $\mathcal{L}^{+}, \mathcal{L}^{-},\mu^{a}t$ as in Theorem \ref{Thmpp}, we have
\begin{equation}
\begin{aligned}
&\lim_{t \to \infty}\Pb\left(\frac{L_{\mathcal{L}^{+}\to (t+ u t^{2/3}/a,t)}-\mu ^{a}t}{2^{4/3}t^{1/3}}\leq s\right)=F_{\GUE}(s)
\\&\lim_{t \to \infty}\Pb\left(\frac{L_{\mathcal{L}^{-}\to (t+ u t^{2/3}/a,t)}-\mu ^{a}t}{2^{4/3}t^{1/3}}\leq s\right)=F_{\GUE}(s-u/2^{4/3}).
\end{aligned}
\end{equation}
\end{prop}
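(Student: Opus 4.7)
The first assertion is the cited Johansson--Baik--Peche result, so the work to be done is the ``In particular'' part, both halves of which follow by applying that result after a translation of the lattice. Since the weights $\{\omega_{i,j}\}$ are i.i.d., shifting the lattice so that $\mathcal{L}^{+}$ (respectively $\mathcal{L}^{-}$) moves to the origin gives
\begin{equation*}
L_{\mathcal{L}^{+}\to E} \stackrel{d}{=} L_{(0,0)\to (m_{1},n_{1})}, \qquad L_{\mathcal{L}^{-}\to E} \stackrel{d}{=} L_{(0,0)\to (m_{2},n_{2})},
\end{equation*}
where $E=(\lfloor t+ut^{2/3}/a\rfloor,\lfloor t\rfloor)$, $(m_{1},n_{1})=(\lfloor t+ut^{2/3}/a\rfloor+\lfloor at^{2/3}\rfloor,\lfloor t\rfloor)$, and $(m_{2},n_{2})=(\lfloor t+ut^{2/3}/a\rfloor,\lfloor t\rfloor+\lfloor at^{2/3}\rfloor)$. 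Both endpoints are diagonal-type ($\eta\to 1$), so the hypotheses of the cited proposition are satisfied.

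The essential simplification is that the shape function can be written in closed form as $\mu_{\rm pp}\ell=(1+\sqrt{\eta})^{2}\ell=(\sqrt{m}+\sqrt{n})^{2}=m+n+2\sqrt{mn}$ when the endpoint is $(m,n)$ and $\ell=n$. I would then expand $\sqrt{m_{i}n_{i}}$ to order $t^{1/3}$ in each case. For case~1, one checks that $m_{1}n_{1}=t^{2}+(a+u/a)t^{5/3}+\mathcal{O}(t)$ lacks an intermediate $t^{4/3}$ term, and Taylor expansion of the square root yields $\mu_{\rm pp}\ell_{1}=\mu^{a}t+\mathcal{O}(1)$. For case~2, the off-diagonal placement of the $at^{2/3}$ correction produces the cross term $m_{2}n_{2}=t^{2}+(a+u/a)t^{5/3}+ut^{4/3}+\mathcal{O}(t)$; the $ut^{4/3}$ coefficient survives under the square root as an additional $ut^{1/3}$, giving $\mu_{\rm pp}\ell_{2}=\mu^{a}t+ut^{1/3}+\mathcal{O}(1)$.

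For the rescaling constant, $\eta=m_{i}/n_{i}\to 1$ and $\ell^{1/3}/t^{1/3}\to 1$ in both cases, so $\sigma_{\eta}\ell^{1/3}=2^{4/3}t^{1/3}(1+o(1))$. The cited limit therefore gives $(L_{(0,0)\to(m_{i},n_{i})}-\mu_{\rm pp}\ell_{i})/(2^{4/3}t^{1/3})\to F_{\GUE}$ in distribution, and adding back the deterministic shift $(\mu_{\rm pp}\ell_{i}-\mu^{a}t)/(2^{4/3}t^{1/3})$ -- which is $o(1)$ for case~1 and $u/2^{4/3}+o(1)$ for case~2 -- produces the claimed limits $F_{\GUE}(s)$ and $F_{\GUE}(s-u/2^{4/3})$ respectively.

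I do not anticipate any serious obstacle beyond careful bookkeeping. The $\mathcal{O}(1)$ errors arising from the floor functions and from the fact that $\eta=m_{i}/n_{i}$ is not exactly of the form $1+c\ell^{-1/3}$ with constant $c$ (in case~2 there is a sub-leading $\mathcal{O}(t^{-2/3})$ correction in $\eta$) contribute only $\mathcal{O}(1)$ to $\mu_{\rm pp}\ell$ and hence vanish after dividing by $t^{1/3}$; the main subtlety is simply to carry out the expansion of $\sqrt{m_2 n_2}$ one order further than of $\sqrt{m_1 n_1}$ to capture the $ut^{1/3}$ shift.
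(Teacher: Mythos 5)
Your proposal is correct and matches the paper's (implicit) argument: the Proposition is quoted from Johansson and Borodin--P\'ech\'e, and the ``in particular'' part is exactly the translation-invariance reduction plus the Taylor expansion of $(\sqrt{m}+\sqrt{n})^{2}$, with $\mu_{\rm pp}\ell_1=\mu^{a}t+\mathcal{O}(1)$ and $\mu_{\rm pp}\ell_2=\mu^{a}t+ut^{1/3}+\mathcal{O}(1)$, which is how the stated shift $u/2^{4/3}$ arises. One tiny imprecision: a sub-leading $\mathcal{O}(t^{-2/3})$ change in $\eta$ would alter $(1+\sqrt{\eta})^{2}\ell$ by $\mathcal{O}(t^{1/3})$, not $\mathcal{O}(1)$, but this is harmless here because you evaluate the exact shape function $m+n+2\sqrt{mn}$ directly and only need the cited limit law to be robust under $c=c(\ell)\to c_{0}$, which it is.
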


To proceed, we need bounds on the transversal fluctuations of maximizers in LPP.
 Let $(m, n)\in \Z^{2}_{\geq 0}$. Denote for $l\leq n$\  \begin{equation}Z_{l}(m,n)=Z_{l}=\max\{i:(i,l)\in \pi^{\mathrm{max}}_{0 \to (m,n)}\}\end{equation}where, with $A,B \in \Z^{2}_{\geq 0},$  $\pi^{\mathrm{max}}_{A \to B}$ is the maximizing path from $A$ to $B$ in the LPP model with independent  weights given  by $\omega_{i,j} \sim \exp(1).$     Similarly, define $Y_{r}^{\mathrm{TOP}}(m,n)$ to be the top-most point of $\pi^{\mathrm{max}}_{0 \to (m,n)}$ on the vertical line $i=r$. 
The following result was formulated for Poisson LPP (and for lines with bounded slope), but extends to the exponential model straightforwardly, see Section 13 in \cite{BSS16}.
\begin{thm}[Corollary 11.7 in \cite{BSS16}]\label{trans1}
Let $ d=d(t)=d_0 + \mathcal{O}(t^{-1/3}), d_0 \in \R$. Set  $m=\lfloor \eta_0 t +d t^{2/3}\rfloor,n=\lfloor t\rfloor. $ 
 There are  constants $C, c, >0$ such that for all $k>0$ 
\begin{align}\label{Z1}
&\limsup_{t\to \infty}\Pb(\max_{\kappa\in [0,1]}\{Z_{\lfloor \kappa t\rfloor}(m,n)- \kappa ( \eta_0 t +d t^{2/3})\} \geq kt^{2/3})\leq C e^{-ck}
\\&\label{Y1}\limsup_{t\to \infty}\Pb(\max_{\kappa\in [0,1]}\{Y_{\lfloor \kappa (\eta_0 t + d t^{2/3})\rfloor}^{\mathrm{TOP}}(m,n)- \kappa t \}\geq kt^{2/3})\leq  C e^{-ck}. \end{align}
\end{thm}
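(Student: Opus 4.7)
The plan is to reduce both estimates \eqref{Z1} and \eqref{Y1} to two standard pillars of exponential LPP: (i) the strict concavity of the limit shape $\mu((0,0),(\xi\ell,\ell)) = (1+\sqrt{\xi})^2\ell$ at the characteristic slope $\xi=\eta_0$, and (ii) the one-point upper-tail estimate $\Pb(L_{A\to B} \geq \mu(A,B)+st^{1/3})\leq Ce^{-cs^{3/2}}$ due to Ledoux--Rider. Fix $\kappa\in[0,1]$ and write $\chi=\chi(\kappa)=\kappa(\eta_0 t+d t^{2/3})$. On the event $\{Z_{\lfloor\kappa t\rfloor}(m,n)-\chi\geq k t^{2/3}\}$ the maximizer passes through some point $P_\Delta=(\lfloor\chi\rfloor+\Delta,\lfloor\kappa t\rfloor)$ with $\Delta\geq kt^{2/3}$, and additivity gives $L_{0\to(m,n)}=L_{0\to P_\Delta}+L_{P_\Delta+(1,0)\to(m,n)}$.

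A second-order Taylor expansion of $\mu$ around the characteristic direction produces the mean deficit
\[
\mu(0,P_\Delta)+\mu(P_\Delta,(m,n))-\mu(0,(m,n))\leq -c\Delta^2/t,
\]
valid for $\Delta$ up to a small constant multiple of $t$. Substituting $\Delta\geq kt^{2/3}$ the deficit is $\geq c k^2 t^{1/3}$. Combined with the Tracy--Widom-type lower-tail bound on $L_{0\to(m,n)}$ (to control the full-length passage time from below on a high-probability event), this forces at least one of the two sub-passage times to exceed its mean by $\geq (c/2)k^2 t^{1/3}$. By the Ledoux--Rider upper tail, each such event contributes $\leq Ce^{-c k^3}$ for a fixed $\Delta$. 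Summing dyadically over $\Delta\in[2^\ell kt^{2/3},2^{\ell+1}kt^{2/3}]$ the scale-$\ell$ deficit is $\geq c 4^\ell k^2 t^{1/3}$, which is amply large enough to absorb the number of points in that scale; the geometric series collapses and produces $Ce^{-ck^3}$ for a single $\kappa$.

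The remaining step is the supremum over $\kappa\in[0,1]$. Here one uses monotonicity: the maximizing path is a staircase, so its horizontal coordinate at height $\lfloor\kappa t\rfloor$ is nondecreasing in $\kappa$, which allows one to pass from a discretization on a grid of only $\mathcal{O}(1)$ many points (chosen at scale $\kappa_j=j\varepsilon$ for small fixed $\varepsilon$) to the continuous supremum without losing anything. The union bound over the grid points is then trivial. Enacting this carefully yields the stated $Ce^{-ck}$ rate in \eqref{Z1}; the exponent $k$ rather than $k^3$ in the final statement reflects the authors' choice to state a form robust enough to be invoked repeatedly downstream rather than the sharpest one available. The symmetric version \eqref{Y1} for the topmost crossing of a vertical line follows by interchanging the two coordinates throughout. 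The main obstacle I expect is the sup-over-$\kappa$ step: setting up the monotonicity/discretization so that neither the upper-tail combinatorics nor the parabolic deficit estimate degrade when sent through a union bound at all intermediate heights simultaneously. The extension from Poisson LPP (as in \cite{BSS16}) to the exponential model amounts to substituting the matching tail estimates for $\omega_{i,j}\sim\exp(1)$ and is routine once the Poisson proof is in hand.
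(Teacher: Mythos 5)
First, a point of comparison: the paper does not prove Theorem \ref{trans1} at all --- it is imported as Corollary 11.7 of \cite{BSS16}, with the remark that the Poissonian statement transfers to exponential weights (Section 13 there). So what you are proposing is a self-contained proof of the BSS estimate, and as such it has a genuine gap, located exactly at the step you yourself flag as the main obstacle.

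Your proposed resolution of the supremum over $\kappa$ does not work. Monotonicity of $\kappa\mapsto Z_{\lfloor\kappa t\rfloor}$ only gives, for $\kappa$ between grid points $\kappa_j$ and $\kappa_{j+1}=\kappa_j+\varepsilon$, the sandwich $Z_{\lfloor\kappa t\rfloor}-\kappa(\eta_0 t+dt^{2/3})\leq Z_{\lfloor\kappa_{j+1}t\rfloor}-\kappa_{j+1}(\eta_0 t+dt^{2/3})+\varepsilon(\eta_0 t+dt^{2/3})$, and the loss $\varepsilon\eta_0 t$ is of order $t$, not $t^{2/3}$; with an $\mathcal{O}(1)$ grid this term swamps $kt^{2/3}$, so you do lose something --- everything, in fact. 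To make the loss $\lesssim kt^{2/3}$ you must take $\varepsilon\sim k t^{-1/3}$, i.e.\ about $t^{1/3}/k$ grid heights, and then the union bound costs a factor growing with $t$: a bound of the form $C(t^{1/3}/k)e^{-ck^{3}}$ has infinite $\limsup_{t\to\infty}$ for every fixed $k$. This is precisely why the uniform-in-$\kappa$ statement in \cite{BSS16} is not ``one-point estimate plus trivial discretization'' but a multiscale/chaining argument built on interval-to-interval moderate deviation estimates (their Sections 10--11), in which the geodesic is controlled at dyadic block scales with a deviation allowance growing from scale to scale, so the per-scale failure probabilities sum to $Ce^{-ck}$ independently of $t$. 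A second, related problem occurs already at fixed $\kappa$: in your dyadic sum over $\Delta$, the number of lattice points at scale $\ell$ is of order $2^{\ell}kt^{2/3}$ --- it grows with $t$, not with $k$ or $\ell$ --- so the curvature deficit $c4^{\ell}k^{2}t^{1/3}$ cannot ``absorb the number of points'' through a pointwise union bound with the Ledoux--Rider tail; one needs point-to-interval (or interval-to-interval) suprema bounds, which again belong to the BSS machinery rather than following from the one-point tail. (The regime where $\Delta$ is comparable to $t$, and $\kappa$ near $0$ or $1$, also needs separate elementary treatment, but that is minor.) The curvature computation and the use of upper/lower tail estimates are indeed the right ingredients and are in the spirit of the cited proof, but as written the uniformization over heights, and over horizontal positions within a given height, is not established.
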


\begin{figure}[h]
  \centering
\begin{tikzpicture}
\draw (-0.5,-1) node[below=7pt] {$\mathcal{L}^{-}$};
\draw (-1.4,0.1) node[below=3pt] {$\mathcal{L}^{+}$};
\draw [ very thick, ->] (0,-3) -- (0,3) node[anchor=east] {$\mathbb{Z}$};
\filldraw (2.9,2.5)  circle (0.1 cm)   ;
\draw (2.9,2.5) node[anchor=west]{$E$};
\draw (0.2,0.68) node[anchor=west]{$R_{+}$};
\draw (1.1,0.7) node[anchor=west]{$R_{-}$};
\draw[dashed,very thick ] (-1,0) -- (2,1.9);
\draw[dotted,very thick] (0,-0.8) -- (2.,1.9);
\filldraw     (-1.4,0)      circle (0.1 cm);
\filldraw     (0,-1.2)      circle (0.1 cm);
\filldraw     (0.9,1.6)      circle (0.1 cm);
\draw  (0.85,1.6)  node[above=1pt]{$E^{+}$};
\draw [ very thick, ->] (-3,0) -- (3,0) node[anchor=south] {$\mathbb{Z}$};
\draw[red,very thick] (-1.4,0)--(-1.2,0.05)--(-1,0.3)--(-0.8,0.35)--(-0.65,0.5)--(-0.4,0.55)--(-0.3,0.6)--(-0.2,0.9)--(0,0.93)--(0.03,1)--(0.1,1.3)--(0.2,1.4)--(0.7,1.45)--(0.85,1.6);
\draw[blue,very thick] (0,-1.2)--(0.2,-1)--(0.3,-0.96)--(0.34,-0.9)--(0.4,-0.8)--(0.5,-0.6)--(0.75,-0.52)--(0.85,-0.4)--(1,-0.2)--(1.1,-0.1)--(1.2,-0.02)--(1.3,0.1)--(1.5,0.4)--(1.7,0.45)--(1.8,0.6)--(1.9,0.9)--(1.95,1)--(2.1,1.3)--(2.2,1.4)--(2.4,1.7)--(2.6,1.9)--(2.75,2)--(2.9,2.5);

\end{tikzpicture}
\caption{
We choose $k=k(a)$ such that $\lim_{a\to \infty}k(a)=\infty,\lim_{a \to \infty}\frac{k(a)}{a}=0,\varepsilon(a)>\frac{k}{a}.$ 
Then, the  maximizing path (blue) from $\mathcal{L}^{-}=(0,-\lfloor a t^{2/3}\rfloor)$ to  $E=(\lfloor t + \frac{u}{a}t^{2/3}\rfloor,\lfloor t \rfloor)$ crosses  
the line segment $R_{-}=R_{-}(k)$ (dotted) with vanishing probability  as $\lim_{a \to \infty} \lim_{t \to \infty}$. The point $E^{+}$ is  at distance $\varepsilon(a)t$ from $E$ on the line connecting $E$ with $\mathcal{L}^{+}=(-\lfloor at^{2/3}\rfloor,0)$ (see \eqref{E+}).
   The maximizer from $\mathcal{L}^{+} $ to $ E^{+}$ crosses $R_{+}=R_{+}(k)$ (dashed) with vanishing probability.  So the two maximizers do not cross  asymptotically, leading to the decoupling.}\label{firt}
\end{figure}
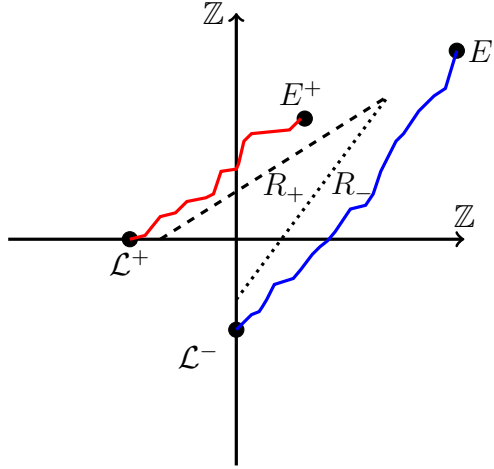

Next we choose the point $E^{+}$ from Assumption 2. From Proposition \ref{JohConvergence} one can easily compute  that $E^{+}$ should lie on the line segment from $\mathcal{L}^{+}$ to $E=(\lfloor t+\frac{u}{a}t^{2/3}\rfloor,\lfloor t\rfloor )$, so it remains to choose $\varepsilon(a)$.
To motivate this choice, note that by Theorem \ref{trans1} we can control the probability that 
$\pi^{\mathrm{max}}_{\mathcal{L}^{+} \to E}, \pi^{\mathrm{max}}_{\mathcal{L}^{-} \to E} $ have transversal fluctuations of order $k t^{2/3}$. In particular, we have  a good upper bound for the probability that $\pi^{\mathrm{max}}_{\mathcal{L}^{+} \to E}$ contains no point of the straight   line  $R_{+}$ which joins  (in $\Z^{2},$ see \eqref{R+}) the points 
  $(\lfloor -at^{2/3}+kt^{2/3}\rfloor,0)$ and $E+(\lfloor kt^{2/3}\rfloor ,0)$ and for the probability that  $\pi^{\mathrm{max}}_{\mathcal{L}^{-} \to E}$ 
  contains  no point of the straight line  $R_{-}$ joining  $(0,\lfloor -at^{2/3}+kt^{2/3}\rfloor )$ and $ E+(0,\lfloor kt^{2/3}\rfloor )$.
Now an elementary calculation reveals that  $R_{-}$ and $R_+$ cross in a point 
\begin{equation}\label{crosspoint}
\left(\left \lfloor t\left(1-\frac{k}{a}\right)+\mathcal{O}(t^{2/3})\right\rfloor, \left\lfloor t\left(1-\frac{k}{a}\right)+\mathcal{O}(t^{2/3})\right\rfloor\right),
\end{equation}
see Figure \ref{firt}.

In view of Assumption \ref{Assumpt3}, we thus should choose $\varepsilon(a)>\frac{k}{a},$ though  to satisfy Assumption \ref{Assumpt2}, this is not necessary, as the following result shows.

\begin{prop}\label{propA2}
Let $1>\varepsilon(a)>0.$   Then Assumption  \ref{Assumpt2} holds with
\begin{align}
&\label{E+}E^{+}=(\lfloor t(1-\varepsilon(a))+t^{2/3}(u/a-\varepsilon(a)(u/a+a))\rfloor,\lfloor t(1-\varepsilon(a))\rfloor)
\\&\mu^{\varepsilon(a)}t=4 \varepsilon(a)t+2\varepsilon(a)(u/a+a)t^{2/3}-\frac{\varepsilon(a)(a+u/a)^{2}}{4}t^{1/3}
\\&c_{\varepsilon(a)}=(1-\varepsilon(a))^{-1/3}.
\\&G_{0}^{a}(s)=F_{\GUE}(s \varepsilon(a)^{-1/3}).
\end{align}
\end{prop}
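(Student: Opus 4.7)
The point $E^{+}$ defined in \eqref{E+} lies by construction on the straight line segment from $\mathcal{L}^{+}$ to $E_1=(\lfloor t+\tfrac{u}{a}t^{2/3}\rfloor,\lfloor t\rfloor)$, at a fraction $(1-\varepsilon(a))$ of the way from $\mathcal{L}^{+}$. Consequently, both sub-problems $E^{+}\to E_1$ and $\mathcal{L}^{+}\to E^{+}$ are point-to-point exponential LPP problems whose endpoint displacement has the same asymptotic slope $\eta=1+(u/a+a)t^{-1/3}$, which is precisely the asymptotic slope of the full problem $\mathcal{L}^{+}\to E_1$. The plan is simply to apply Proposition \ref{JohConvergence} separately to each of these two sub-problems and verify that the resulting constants match those in the statement.

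For the first displayed limit, the displacement from $E^{+}$ to $E_1$ is $(\varepsilon(a)t+\varepsilon(a)(u/a+a)t^{2/3},\,\varepsilon(a)t)$. I would take $\ell=\lfloor\varepsilon(a)t\rfloor$; then $\eta-1=c\ell^{-1/3}$ with $\eta_0=1$ and $c=(u/a+a)\varepsilon(a)^{1/3}$, so Proposition \ref{JohConvergence} applies. Taylor expanding $(1+\sqrt{1+x})^{2}=4+2x-x^{2}/4+O(x^{3})$ at $x=(u/a+a)t^{-1/3}$ and multiplying by $\ell$ produces $\mu_{\mathrm{pp}}\ell=\mu^{\varepsilon(a)}t+o(t^{1/3})$, the three leading terms reproducing $\mu^{\varepsilon(a)}t$ exactly. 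The fluctuation scale is $\sigma_{\eta}\ell^{1/3}=2^{4/3}\varepsilon(a)^{1/3}t^{1/3}(1+o(1))$, so in the $2^{4/3}t^{1/3}$ normalization used throughout Theorem \ref{Thmpp} one reads off $G_0^{a}(s)=F_{\GUE}(s\varepsilon(a)^{-1/3})$, as claimed.

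For the second displayed limit, I observe that the displacement from $\mathcal{L}^{+}$ to $E^{+}$ equals \emph{exactly} $(1-\varepsilon(a))$ times the displacement from $\mathcal{L}^{+}$ to $E_1$; in particular $\eta$ is unchanged while $\ell=\lfloor(1-\varepsilon(a))t\rfloor$. The same Taylor expansion gives $\mu_{\mathrm{pp}}\ell=(1-\varepsilon(a))\mu^{a}t+o(t^{1/3})$, and the elementary identity $\mu^{\varepsilon(a)}t=\varepsilon(a)\mu^{a}t$ (clear from the definitions) rewrites this as $\mu^{a}t-\mu^{\varepsilon(a)}t+o(t^{1/3})$. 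Since $\sigma_{\eta}\ell^{1/3}=2^{4/3}(1-\varepsilon(a))^{1/3}t^{1/3}(1+o(1))$, the limiting distribution of $(L_{\mathcal{L}^{+}\to E^{+}}+\mu^{\varepsilon(a)}t-\mu_1^{a}t)/(2^{4/3}t^{1/3})$ is $F_{\GUE}((1-\varepsilon(a))^{-1/3}s)$, matching $G_1^{a}(c_{\varepsilon(a)}s)$ with $c_{\varepsilon(a)}=(1-\varepsilon(a))^{-1/3}$.

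The only step that needs attention is checking that the first three terms of the Taylor expansion of $(1+\sqrt{\eta})^{2}\ell$ reproduce $\mu^{\varepsilon(a)}t$ (respectively $(1-\varepsilon(a))\mu^{a}t$) on the nose, so that the remainder is $O(\ell\cdot(u/a+a)^{3}t^{-1})=o(t^{1/3})$ and disappears under the fluctuation rescaling; floor-function rounding at the endpoints and the a.s.\ uniqueness of the maximizer in exponential LPP are routine to handle and affect neither $\mu^{\varepsilon(a)}t$ nor the limiting fluctuations.
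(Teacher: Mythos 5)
Your proposal is correct and follows essentially the same route as the paper: both verify Assumption \ref{Assumpt2} by applying Proposition \ref{JohConvergence} separately to the two point-to-point problems $\mathcal{L}^{+}\to E^{+}$ and $E^{+}\to E_1$ and expanding $\mu_{\rm pp}$ to order $t^{1/3}$, reading off $c_{\varepsilon(a)}$ and $G_0^{a}$ from the fluctuation scales $2^{4/3}(1-\varepsilon(a))^{1/3}t^{1/3}$ and $2^{4/3}\varepsilon(a)^{1/3}t^{1/3}$. Your observation that $E^{+}$ splits the segment at fraction $1-\varepsilon(a)$, so that $\mu^{\varepsilon(a)}t=\varepsilon(a)\mu^{a}t$ and the two means add up to $\mu^{a}t$ up to $o(t^{1/3})$, is exactly equivalent to the paper's check that $r_1=(u/a+a)(1-\varepsilon(a))$ solves the condition $\frac{r_2^2}{4\varepsilon(a)}+\frac{r_1^2}{4(1-\varepsilon(a))}=\frac{(u/a+a)^2}{4}$.
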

\begin{proof} We have  $L_{\mathcal{L}^{+} \to E^{+}}=^{d}L_{0 \to ((\lfloor t(1-\varepsilon(a))+t^{2/3}r_1 \rfloor,\lfloor t(1-\varepsilon(a))\rfloor)}$ for  $r_1 = (u/a+a)(1-\varepsilon(a))$. The $\mu_{\rm pp}$ of Proposition  \ref{JohConvergence} for 
$L_{\mathcal{L}^{+} \to E^{+}}$ is given by  $\mu_{\rm pp}t=4t(1-\varepsilon(a))+2 t^{2/3}r_1-\frac{r_{1}^{2}}{4(1-\varepsilon(a))}t^{1/3}$ and the one, with $E=(\lfloor t+\frac{u}{a}t^{2/3}\rfloor,\lfloor t\rfloor )$,  of $L_{E^{+} \to E}$ 
equals 
$\mu^{\varepsilon(a)}t=4t\varepsilon(a)+2 t^{2/3}r_2-\frac{r_{2}^{2}}{4\varepsilon(a)}t^{1/3},$ with    $r_2=-r_1+u/a+a$ and   since the two terms need to sum up to  $\mu^{a}$ from  Theorem \ref{Thmpp}  we obtain the condition
\begin{equation}
\frac{r_{2}^{2}}{4\varepsilon(a)}+\frac{r_{1}^{2}}{4(1-\varepsilon(a))}=\frac{(u/a+a)^{2}}{4},
\end{equation}
which is precisely solved by our $r_1.$ Finally, $c_{\varepsilon(a)}$ and $G_{0}^{a}$ are immediately obtained from Proposition  \ref{JohConvergence}.
\end{proof}

Let now $E^{+} $ be as in \eqref{E+} and denote $E^{+,k}=E^{+}+(k t^{2/3},0)$. Define
$
\mathfrak{R}_{+}(k)=\overline{(\lfloor - a t^{2/3}+kt^{2/3}\rfloor,0)E^{+,k}}
$
as the line segment  (in $\R^{2}$ ) from $(\lfloor - a t^{2/3}+kt^{2/3}\rfloor,0)$ to $
E^{+,k}$, and denote \begin{equation}\label{R+}R_{+}(k)=\{x \in \Z^{2}: |x-y|\leq 2 \mathrm{\,\,for \,\,a \,\,} 
y \in \mathfrak{R}_{+}(k)\}\end{equation} a discrete approximation. See Figure \ref{firt}.
 Denote by $\Pi^{+,k}$ the set of up-right paths from $\mathcal{L}^{+}$ to $E^{+}$ which do not contain any point of $R_{+}(k)$. Set 
\begin{equation}\label{tilde+}
\tilde{L}_{ \mathcal{L}^{+} \to E^{+}}=\tilde{L}_{ \mathcal{L}^{+} \to E^{+}}(k)=\max_{\pi \in \Pi^{+,k} }\sum_{(i,j) \in \pi}\omega_{i,j} .
\end{equation}
Let now $E=(\lfloor t +\frac{u}{a}t^{2/3}\rfloor, \lfloor t \rfloor )$ and $E^{k}=E+(0,kt^{2/3})$.
Write 
$
\mathfrak{R}_{-}(k)=\overline{(0,\lfloor-a t^{2/3}+kt^{2/3}\rfloor)E^{k}}
$
for the line segment in $\R^{2}$ joining $(0,\lfloor-a t^{2/3}+kt^{2/3}\rfloor)$ and $E^{k}$ and set
\begin{equation}\label{R-}
R_{-}(k)=\{ x \in \Z^{2}:|x-y|\leq 2 \mathrm{\, \,for \, \,a\,\,} y\in \mathfrak{R}_{-}(k)\}. 
\end{equation}
Define $\Pi^{-,k}$ to be the set of up-right paths from $\mathcal{L}^{-}$ to $E$ which do not contain any point of $R_{-}(k)$. We define
\begin{equation}\label{tilde-}
\tilde{L}_{\mathcal{L}^{-}\to E}=\tilde{L}_{\mathcal{L}^{-}\to E}(k)=\max_{\pi \in \Pi^{-,k}}\sum_{i,j \in \pi}\omega_{i,j}.
\end{equation}

\begin{prop}\label{propA3} Let $a>k>0$ and  let $E^{+}$ be given by  \eqref{E+} 
with $1>\varepsilon(a)>\frac{k}{a}$ and let $\tilde{L}_{\mathcal{L}^{-}\to E}(k), \tilde{L}_{ \mathcal{L}^{+} \to E^{+}}(k)$ be given by \eqref{tilde-},\eqref{tilde+}.
Then there are  constants $c,C>0$ such that Assumption \ref{Assumpt3} holds with $\psi=Ce^{-ck}$.
\end{prop}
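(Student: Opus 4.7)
The plan is to verify the two requirements of Assumption \ref{Assumpt3}: that $\tilde{L}_{\mathcal{L}^{+}\to E^{+}}$ and $\tilde{L}_{\mathcal{L}^{-}\to E}$ are independent, and that each agrees with its unrestricted counterpart with probability at least $1-Ce^{-ck}$; a union bound then gives $\tilde{\psi}=Ce^{-ck}$.

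For the independence, I would argue that the two restricted last passage times are measurable with respect to disjoint families of weights $\omega_{i,j}$. Any $\pi\in\Pi^{+,k}$ joins $\mathcal{L}^{+}$ and $E^{+}$, both of which sit to the left of $\mathfrak{R}_{+}(k)$ at their respective $y$-levels by exactly $kt^{2/3}$. Since the fattening of $\mathfrak{R}_{+}(k)$ to width $2$ in \eqref{R+} blocks every individual unit step from crossing the line, any path avoiding $R_{+}(k)$ must remain in the open half-plane of $\mathfrak{R}_{+}(k)$ containing $\mathcal{L}^{+}$. The symmetric statement holds for $\Pi^{-,k}$: each such path lies in the half-plane of $\mathfrak{R}_{-}(k)$ containing $\mathcal{L}^{-}$. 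A slope computation, consistent with \eqref{crosspoint}, shows that $\mathfrak{R}_{+}(k)$ has slope slightly less than $1$, $\mathfrak{R}_{-}(k)$ has slope slightly greater than $1$, and the two lines cross at $x$-coordinate $\approx t(1-k/a)$, with $\mathfrak{R}_{+}(k)$ strictly above $\mathfrak{R}_{-}(k)$ for $x<t(1-k/a)$. The assumption $\varepsilon(a)>k/a$ forces $E^{+}_{x}\approx t(1-\varepsilon(a))<t(1-k/a)$, so every vertex of every $\pi\in\Pi^{+,k}$ has $x$-coordinate strictly less than the crossing abscissa; at such $x$-levels, the two relevant half-planes are disjoint. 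Consequently no vertex is shared by any $\pi\in\Pi^{+,k}$ and any $\pi'\in\Pi^{-,k}$, and the desired independence follows from the independence of the family $\{\omega_{i,j}\}$.

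For the probability bound, uniqueness of the maximizer yields, almost surely, the identity $\{\tilde{L}_{\mathcal{L}^{+}\to E^{+}}\neq L_{\mathcal{L}^{+}\to E^{+}}\}=\{\pi^{\max}_{\mathcal{L}^{+}\to E^{+}}\cap R_{+}(k)\neq\emptyset\}$, and this event forces $\pi^{\max}_{\mathcal{L}^{+}\to E^{+}}$ to deviate transversally from the characteristic segment $\overline{\mathcal{L}^{+}E^{+}}$ by at least order $kt^{2/3}$. After translating $\mathcal{L}^{+}$ to the origin and letting $\tilde{t}:=(1-\varepsilon(a))t\asymp E^{+}_{y}$ play the role of $t$ in Theorem \ref{trans1}, this is precisely the event controlled by the $Z$-estimate \eqref{Z1}, yielding a bound $Ce^{-ck}$ (the rescaling only improves the exponent, since $(1-\varepsilon(a))^{2/3}\leq 1$). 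The same argument, applied to $\pi^{\max}_{\mathcal{L}^{-}\to E}$ and $R_{-}(k)$, gives $\Pb(\tilde{L}_{\mathcal{L}^{-}\to E}\neq L_{\mathcal{L}^{-}\to E})\leq Ce^{-ck}$, and a union bound finishes the proof.

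The only nontrivial step is the independence: the geometric role of the constraint $\varepsilon(a)>k/a$ is precisely to push the end-point $E^{+}$ to the pre-crossing side of the two barrier lines $\mathfrak{R}_{\pm}(k)$, which is what makes the vertex sets traversed by paths in $\Pi^{+,k}$ and $\Pi^{-,k}$ disjoint. The fluctuation bound is then a direct invocation of the already-cited Theorem \ref{trans1}.
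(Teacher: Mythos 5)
Your proof is correct and takes essentially the same route as the paper: Theorem \ref{trans1} applied after translation invariance (the paper uses the $Z$-bound \eqref{Z1} for $\pi^{\max}_{\mathcal{L}^{+}\to E^{+}}$ and the $Y^{\mathrm{TOP}}$-bound \eqref{Y1} for $\pi^{\max}_{\mathcal{L}^{-}\to E}$, whose barrier is shifted vertically) gives the $Ce^{-ck}$ bound, and the condition $\varepsilon(a)>k/a$ together with \eqref{crosspoint} makes the two barrier segments, hence the admissible vertex sets of $\Pi^{+,k}$ and $\Pi^{-,k}$, disjoint, yielding independence. Your half-plane discussion merely spells out the geometric disjointness that the paper asserts more tersely.
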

\begin{proof}
Note that we have  $L_{\mathcal{L}^{+} \to E^{+}}=^{d}L_{0 \to ((\lfloor t(1-\varepsilon(a))+t^{2/3}r_1 \rfloor,\lfloor t(1-\varepsilon(a))\rfloor)}$ for  $r_1 = (u/a+a)(1-\varepsilon(a)).$ Write $(\lfloor t(1-\varepsilon(a))+t^{2/3}r_1 \rfloor,\lfloor t(1-\varepsilon(a))\rfloor)=(m_+ ,n_+).$
Thus  by translation invariance and Theorem  \ref{trans1} 
\begin{align}
&\limsup_{t \to \infty}\Pb(L_{\mathcal{L}^{+} \to E^{+}}\neq  \tilde{L}_{\mathcal{L}^{+} \to E^{+}}(k))
\\&\leq \limsup_{t \to \infty}\Pb(\max_{\kappa \in [0,1]}\{Z_{\lfloor \kappa n_+ \rfloor }(m_+,n_+)-\kappa m_+\}\geq k t^{2/3})\leq C e^{-ck}.
\end{align}
Furthermore, 
\begin{equation}
L_{\mathcal{L}^{-} \to E}=^{d}L_{0 \to (\lfloor t +\frac{u}{a}t^{2/3}\rfloor,\lfloor t\rfloor+\lfloor a t^{2/3} \rfloor)}.
\end{equation}
Setting $T=\lfloor t\rfloor+\lfloor a t^{2/3} \rfloor$ we have $t+\frac{u}{a}t^{2/3}=T+(\frac{u}{a}-a)T^{2/3}+\mathcal{O}(T^{1/3})=T+c_{-}(T)T^{2/3}$ for a $c_{-}(T)=\frac{u}{a}-a+\mathcal{O}(T^{-1/3})$ such that 
\begin{equation}
L_{\mathcal{L}^{-} \to E}=^{d}L_{0 \to (\lfloor T+c_{-}(T)T^{2/3}\rfloor,\lfloor T \rfloor)}.
\end{equation}
We thus get 
\begin{align*}
&\limsup_{ t \to \infty}\Pb(L_{\mathcal{L}^{-} \to E}\neq \tilde{L}_{\mathcal{L}^{-}\to E})\\&\leq \limsup_{ t \to \infty}\Pb \left(\max_{\kappa \in [0,1]}\{Y^{\mathrm{TOP}}_{\lfloor \kappa (T+c_{-}(T)T^{2/3})\rfloor}-\kappa T\}\geq kT^{2/3}/2 \right)
\\& \leq C e^{-ck}.
\end{align*}
Finally, the independence of $\tilde{L}_{\mathcal{L}^{-}\to E}(k), \tilde{L}_{ \mathcal{L}^{+} \to E^{+}}(k)$ follows from choosing $\varepsilon(a)>k/a$ and \eqref{crosspoint}:  The admissible paths for $\tilde{L}_{ \mathcal{L}^{+} \to E^{+}}(k)$ 
do not cross $R_{+}(k)$ from \eqref{R+}, and the admissible paths for $\tilde{L}_{\mathcal{L}^{-}\to E}(k)$  do not cross $R_{-}(k)$ from \eqref{R-}, and since $\varepsilon(a)>k/a,$  we have by \eqref{crosspoint} that 
$R_{+}(k), R_{-}(k)$ do not cross each other, see also  Figure \ref{firt}. So $\tilde{L}_{\mathcal{L}^{-}\to E}(k), \tilde{L}_{ \mathcal{L}^{+} \to E^{+}}(k)$  may only use points from disjoint, (deterministic) subsets of $\Z^{2},$ leading to the independence. 

\end{proof}

\begin{proof}[Proof of Theorem \ref{Thmpp}]
Assumptions \ref{Assumpt1},\ref{Assumpt2},\ref{Assumpt3} of Theorem \ref{GenGen} have been verified in  Propositions \ref{JohConvergence}, \ref{propA2},\ref{propA3}, such that the result follows.

\end{proof}

Next we proof  Theorem \ref{thmtasep}.

\begin{proof}[Proof of Theorem \ref{thmtasep}]
 Define $c_1= -\frac{\frac{u}{a}+a}{2},$  $c_{2}=\frac{u}{a}$ and $\xi_{2}=\frac{(u/a+a)^{2}}{2}-2^{-1/3}s$.
 Note that (see e.g. Theorem 5 in \cite{CP15}) for $K \in \N, v \in \R, \gamma \in [0,1/3]$
 \begin{equation}\label{local}
 \lim_{K \to \infty}\frac{L_{0 \to (K+\lfloor K^{\gamma} v\rfloor,K)}-L_{0 \to (K,K)}-2vK^{\gamma }}{K^{1/3}}=0.
 \end{equation}
In particular, since we are only interested in asymptotic results, any shift of order $1$ of the end/ starting point for a point-to-point LPP time will be asymptotically irrelevant.We set 
\begin{align*}
t=\left\lfloor \frac{T}{4}+c_{1}T^{2/3}\right\rfloor
\quad M=t+\lfloor c_{2}T^{2/3}+\xi_2 T^{1/3}\rfloor
\end{align*}
Then $T^{1/3}=(4t)^{1/3}+\mathcal{O}(1)$ as well as 
\begin{equation}
\begin{aligned}
&T=4t-c_{1}t^{2/3}4^{5/3}+c_{1}^{2}\frac{2}{3}t^{1/3}4^{7/3}+\mathcal{O}(1)
\\&T^{2/3}=(4t)^{2/3}-c_{1}\frac{2}{3}t^{1/3}4^{4/3}+\mathcal{O}(1).
\end{aligned}
\end{equation}
We define furthermore
\begin{equation}
\hat{\mathcal{L}}^{+}=(\lfloor -a((4t)^{2/3}-c_{1}\frac{2}{3}t^{1/3}4^{4/3})\rfloor,0)\quad \hat{\mathcal{L}}^{-}=(0,\lfloor -a((4t)^{2/3}-c_{1}\frac{2}{3}t^{1/3}4^{4/3})\rfloor)
\end{equation}
and $\hat{\mathcal{L}}=\hat{\mathcal{L}}^{+}\cup \hat{\mathcal{L}}^{-}$.
Then by the link \eqref{LPPTASEP}
\begin{equation}
\begin{aligned}
&\lim_{T \to \infty}\Pb \bigg( x_{\lfloor \frac{T}{4}-T^{2/3}\frac{a+\frac{u}{a}}{2}\rfloor}(T)\geq \frac{u}{a}T^{2/3}+T^{1/3}\frac{(\frac{u}{a}+a)^{2}}{2}-\frac{T^{1/3}}{2^{1/3}}s \bigg)
\\&=\lim_{T \to \infty}\Pb \left(L_{\{(-\lfloor a T^{2/3}\rfloor,0),(0,-\lfloor a T^{2/3}\rfloor )\}\to (\lfloor \frac{T}{4}+c_{1}T^{2/3}+c_{2}T^{2/3}+\xi_{2}T^{1/3}\rfloor,\lfloor \frac{T}{4}+c_{1}T^{2/3}\rfloor)}\leq T\right)
\\&=\lim_{t \to \infty}\Pb \left(L_{\hat{\mathcal{L}}\to (M, t)}\leq 4t-c_{1}t^{2/3}4^{5/3}+c_{1}^{2}\frac{2}{3}t^{1/3}4^{7/3}\right).
\end{aligned}
\end{equation}
We now check the Assumptions 1,2,3 for the LPP times $L_{\hat{\mathcal{L}}^{+}\to (M,t)},L_{\hat{\mathcal{L}}^{-}\to (M,t)}$.
By Proposition \ref{JohConvergence} and \eqref{local}, we have with $\mu_{\hat{\mathcal{L}}^{+}\to (M,t)},\mu_{\hat{\mathcal{L}}^{-}\to (M,t)}$ defined by 
\begin{equation}
\begin{aligned}
&\mu_{\hat{\mathcal{L}}^{+}\to (M,t)}t=4t+2(c_2+a)(4t)^{2/3}-4^{1/3}(c_2+a)^{2}t^{1/3}+2(4t)^{1/3}(\xi_2-8c_{1}(c_2+a)/3)
\\&\mu_{\hat{\mathcal{L}}^{-}\to (M,t)}t=\mu_{\hat{\mathcal{L}}^{+}\to (M,t)}t+u4^{4/3}t^{1/3}
\end{aligned}
\end{equation}
the convergence
\begin{equation}
\begin{aligned}\label{GUEhat}
&\lim_{t \to \infty}\Pb\left(L_{\hat{\mathcal{L}}^{+}\to (M,t)}\leq\mu_{\hat{\mathcal{L}}^{+}\to (M,t)}t+ s2^{4/3}t^{1/3}\right)=F_{\GUE}(s)
\\&\lim_{t \to \infty}\Pb\left(L_{\hat{\mathcal{L}}^{-}\to (M,t)}\leq\mu_{\hat{\mathcal{L}}^{-}\to (M,t)}t+ s2^{4/3}t^{1/3}\right)=F_{\GUE}(s).
\end{aligned}
\end{equation}
The choice of $c_1,c_2,\xi_2$ is precisely such that 
\begin{equation}
\mu_{\hat{\mathcal{L}}^{+}\to (M,t)}t+ s2^{4/3}t^{1/3}=4t-c_{1}t^{2/3}4^{5/3}+c_{1}^{2}\frac{2}{3}t^{1/3}4^{7/3}.
\end{equation}

So \eqref{GUEhat} verifies Assumption 1. Next we choose the point $\hat{E}^{+}$ of Assumption 2. Note that with $\tilde{a}=a4^{2/3},\tilde{u}=4^{4/3}u$  we have
\begin{align}\label{t13}
L_{\hat{\mathcal{L}}^{+}\to (M,t)}=L_{(-\lfloor \tilde{a} t^{2/3}+\mathcal{O}(t^{1/3})\rfloor,0)\to (\lfloor t+\frac{\tilde{u}}{\tilde{a}}t^{2/3}+\mathcal{O}(t^{1/3})\rfloor ,t)}.
\end{align}
This is, with $\tilde{a},\tilde{u}$ instead of $a,u$ and  up to an $\mathcal{O}(t^{1/3})$ horizontal shift in the starting and end point, the same LPP time for which we chose $E^{+}$ in \eqref{E+}.   Hence we can take $\hat{E}^{+}$ as $E^{+}$ in \eqref{E+}, only with $\tilde{a}, \tilde{u}$ instead of $a,u$.
Finally, Assumption 3 can be verified as in the proof of Theorem \ref{Thmpp}, the horizontal  $\mathcal{O}(t^{1/3})$ shifts in \eqref{t13} (and in $L_{\hat{\mathcal{L}}^{-}\to (M,t)}$ ) not affecting the argument.
\end{proof}
To prove Corollary \ref{CorA2}, we need to know how the $\mathrm{Airy}_{2}$ arises in the LPP model of Theorem \ref{Thmpp}, hence the following lemma.
\begin{lem}\label{airy22} Let $\mathcal{L}$ be as in Theorem \ref{Thmpp}.
We have 
\begin{align}
\lim_{t \to \infty}\Pb\left(\frac{L_{\mathcal{L} \to (t+\frac{u}{a}t^{2/3},t)}-\mu^{a}t}{2^{4/3}t^{1/3}}\leq s\right)=\Pb\bigg(&\mathcal{A}_{2}\left(\frac{-a-\frac{u}{a}}{2^{5/3}}\right)\leq s, \\&\mathcal{A}_{2}\left(\frac{a-\frac{u}{a}}{2^{5/3}}\right)\leq s-\frac{u}{2^{4/3}}\bigg).
\end{align}
\end{lem}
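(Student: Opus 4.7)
The strategy is to reduce the statement to joint convergence of point-to-point LPP times to the Airy$_2$ process. Since $L_{\mathcal{L}\to E}=\max\{L_{\mathcal{L}^+\to E},L_{\mathcal{L}^-\to E}\}$ with $E=(\lfloor t+ut^{2/3}/a\rfloor,\lfloor t\rfloor)$, the probability in the lemma equals
\begin{equation*}
\Pb\bigl(L_{\mathcal{L}^+\to E}\leq \mu^a t+2^{4/3}st^{1/3},\;L_{\mathcal{L}^-\to E}\leq \mu^a t+2^{4/3}st^{1/3}\bigr).
\end{equation*}
Reflecting the lattice through $E$ by $(i,j)\mapsto(E_1-i,E_2-j)$ and relabeling the weights (the $O(1)$ perturbation from the zero convention on the two single points $\mathcal{L}^\pm$ is absorbed by the $t^{1/3}$ scaling), this pair has the same joint distribution as $(L_{(0,0)\to E_1^r},L_{(0,0)\to E_2^r})$, where
\begin{equation*}
E_1^r=(\lfloor t+(a+u/a)t^{2/3}\rfloor,\lfloor t\rfloor),\quad E_2^r=(\lfloor t+(u/a)t^{2/3}\rfloor,\lfloor t+at^{2/3}\rfloor).
\end{equation*}
A direct check shows both endpoints satisfy $m+n=2t+(a+u/a)t^{2/3}$, so they lie on a common anti-diagonal --- the natural geometry for the multipoint Airy$_2$ limit.

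I would then invoke the joint Airy$_2$ convergence for exponential LPP from a single origin to finitely many endpoints in the KPZ $t^{2/3}$ window, the multipoint extension of Proposition \ref{JohConvergence} due to Johansson. For $E_1^r$, setting $\eta=1+(a+u/a)t^{-1/3}$, a Taylor expansion of $(1+\sqrt\eta)^2 t$ yields centering $\mu^a t+O(1)$ and scale $\sigma_\eta t^{1/3}\to 2^{4/3}t^{1/3}$, with Airy$_2$ argument $(a+u/a)/2^{5/3}$. For $E_2^r$, introduce $T=t+at^{2/3}$ and define $\xi_2$ by $\xi_2 T^{2/3}=(u/a-a)t^{2/3}$, so that $E_2^r=(\lfloor T+\xi_2 T^{2/3}\rfloor,\lfloor T\rfloor)$ with $\xi_2\to u/a-a$. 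The first two terms of the centering satisfy $4T+2\xi_2 T^{2/3}=4t+2(a+u/a)t^{2/3}$ identically, and Taylor expanding the $\xi_2^2 T^{1/3}/4$ term using the algebraic identity $(a+u/a)^2-(u/a-a)^2=4u$ gives
\begin{equation*}
4T+2\xi_2 T^{2/3}-\xi_2^2 T^{1/3}/4=\mu^a t+ut^{1/3}+O(1),
\end{equation*}
while the scale is still $2^{4/3}t^{1/3}$ to leading order. Consequently the rescaled pair converges jointly to $(\mathcal{A}_{2}((a+u/a)/2^{5/3}),\,\mathcal{A}_{2}((u/a-a)/2^{5/3})+u/2^{4/3})$.

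The target probability thus tends to $\Pb(\mathcal{A}_{2}((a+u/a)/2^{5/3})\leq s,\,\mathcal{A}_{2}((u/a-a)/2^{5/3})\leq s-u/2^{4/3})$, and the form displayed in the lemma follows by the symmetry $(\mathcal{A}_{2}(y_1),\mathcal{A}_{2}(y_2))\stackrel{d}{=}(\mathcal{A}_{2}(-y_1),\mathcal{A}_{2}(-y_2))$ of the stationary Airy$_2$ process --- a consequence, for instance, of the $(i,j)\leftrightarrow(j,i)$ reflection symmetry of exponential LPP, which simultaneously negates the anti-diagonal coordinates of both endpoints. The principal obstacle is invoking the multipoint Airy$_2$ convergence in the regime where the two endpoints lie on a common anti-diagonal rather than a common horizontal line (which is the most commonly quoted form); once that input is granted, the reconciliation of the two natural parametrizations (native $t$ for $E_1^r$ and shifted $T=t+at^{2/3}$ for $E_2^r$) is routine Taylor book-keeping, made transparent by the identity $(a+u/a)^2-(u/a-a)^2=4u$.
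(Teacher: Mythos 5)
Your reduction coincides with the paper's: reflecting the weight field through the endpoint gives exactly the identity \eqref{LPPairy2}, and your subsequent bookkeeping is correct --- the expansion of the centering for the second endpoint via $T=t+at^{2/3}$, the identity $(a+u/a)^{2}-(u/a-a)^{2}=4u$ producing the $ut^{1/3}$ (hence $u/2^{4/3}$) shift, and the final appeal to the reversibility of the stationary $\mathcal{A}_2$ process to flip both arguments are all fine; the $\mathcal{O}(1)$ effects of boundary weights and floors are indeed negligible at the $t^{1/3}$ scale.

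The gap is the input you grant yourself: a joint Airy$_2$ limit for exponential LPP from a single origin to two endpoints lying on a common anti-diagonal. You flag this as the principal obstacle and then assume it, but it is the only nontrivial content of the lemma, and it is precisely what the paper's proof supplies. The result you would cite (the multipoint extension of Proposition \ref{JohConvergence} ``due to Johansson'') is stated either for endpoints on a common horizontal line or for the geometric/PNG model, not verbatim for this geometry in the exponential model; one must either quote a space-like-path or fixed-time step-initial-TASEP result, or argue the point. The paper argues it with tools already at hand: it invokes the horizontal-line multipoint convergence (Corollary 2.4 of \cite{FO17}) and then replaces $L_{(0,0)\to(t+(a+u/a)t^{2/3},t)}$ by the passage time to the point $P$ where the characteristic through that endpoint meets the horizontal line at height $t+at^{2/3}$; since $P$ is at distance $\mathcal{O}(t^{2/3})=\mathcal{O}(t^{\nu})$, $\nu<1$, from the original endpoint, slow decorrelation \cite{CFP10b} shows the replacement does not change the rescaled limit, after which both endpoints sit on one horizontal line. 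Adding this one step (or a precise citation that genuinely covers the anti-diagonal geometry) closes your argument; everything else in your proposal matches the paper.
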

\begin{proof}
By exchanging the end point and $\mathcal{L}$ we see that 
\begin{equation}\label{LPPairy2}
L_{\mathcal{L} \to (t+\frac{u}{a}t^{2/3},t)}=^{d}\max\{L_{(0,0) \to (t+(u/a+a)t^{2/3},t)},L_{(0,0) \to (t+ut^{2/3}/a,t+at^{2/3})}\}
\end{equation}
where $=^{d}$ denote equality in distribution.  Now, for end points lying on the same horizontal line, the convergence of the rescaled LPP times to the $\mathcal{A}_{2}$ process is e.g. proven in Corollary 2.4 in \cite{FO17}. To extend this result to the two endpoints of \eqref{LPPairy2} (which do not lie on the same horizontal line), one follows the straight (characteristic)  line joining $(0,0)$ and $ (t+(u/a+a)t^{2/3},t)$ until it reaches the horizontal line $(\cdot,t+at^{2/3})$  in a point $P$. By slow decorrelation,  we may replace  $L_{(0,0) \to (t+(u/a+a)t^{2/3},t)}$ by $L_{(0,0) \to P}$  without altering the limiting distribution, giving the result.
\end{proof}
\begin{proof}[Proof of Corollary \ref{CorA2}]
It is an immediate Corollary of Theorem \ref{Thmpp} and Lemma \ref{airy22}, by a simple change of variable.

\end{proof}

\subsection{Proof of Theorems \ref{twotime} and  \ref{GenGen}  }\label{222}

Next we come to the proof of Theorem  \ref{twotime}.
\begin{proof}[Proof of Theorem \ref{twotime}]
The lower bound in \eqref{2time} follows from the FKG inequality and the known convergence to the $\mathrm{Airy}_2$ process, see Theorem 2 in \cite{BP07}. For the upper bound, define the points
\begin{equation}
P_{2}(u)=(\lfloor \tau t +u t^{2/3}(\tau a^{-1/3}-a^{2/3}) \rfloor, \lfloor \tau t+1\rfloor), 
\end{equation}
and set $\mu_{u}t=4\tau t+2\tau t^{2/3}ua^{-1/3}-u^{2}\frac{\tau t^{1/3} }{4a^{2/3}}.$ Then for any $\delta >0$ 
\begin{equation}\label{kgue}
\lim_{t \to \infty}\Pb\left(\bigcup_{i=1}^{k}\frac{L_{P(a,u_i)\to  P_{2}(u_i)}-\mu_{u_i}t}{2^{4/3}(at)^{1/3}}\leq -\delta \right)\leq kF_{\GUE}(-\delta a^{1/3}\tau^{-1/3}).
\end{equation}
Denote for brevity $\mathcal{F}=\bigcap_{i=1}^{l}\{L^{\mathrm{resc}}_{P(\tau,r_i)\to (\lfloor ct_j\rfloor ,\lfloor \tau t_j\rfloor )}\leq s_i\}$. Then, using subadditivity and \eqref{kgue}, we get 
\begin{align}
& \lim_{ t_j \to \infty}\Pb\left( \mathcal{F}\cap  \bigcap_{i=1}^{k}\{L^{\mathrm{resc}}_{P(a,u_i)\to (\lfloor at_j\rfloor ,\lfloor at_j \rfloor )}\leq \zeta_i\}\right)
 \\&\leq \lim_{ t_j \to \infty}\Pb\left( \mathcal{F}\cap  \bigcap_{i=1}^{k}\{  
 \frac{L_{P(a,u_i)\to  P_{2}(u_i)}-\mu_{u_i}t_j}{2^{4/3}(at_j)^{1/3}}+ \frac{L_{ P_{2}(u_i)\to (\lfloor at_j \rfloor, \lfloor at_j \rfloor)}-\mu(a,u_i)t_j+\mu_{u_i}t_j}{2^{4/3}(at_j)^{1/3}}
  \leq \zeta_i\}\right)
  \\&\label{eqqq}\leq \lim_{ t_j \to \infty}\Pb\left( \mathcal{F}\cap  \bigcap_{i=1}^{k}\{  
  \frac{L_{ P_{2}(u_i)\to (\lfloor at_j \rfloor, \lfloor at_j \rfloor)}-\mu(a,u_i)t_j+\mu_{u_i}t_j}{2^{4/3}(at_j)^{1/3}}
  \leq \zeta_i+\delta \}\right)\\&\label{eqqqq} + kF_{\GUE}(-\delta a^{1/3}\tau^{-1/3}).
  \end{align}
  Note now that $L_{P(\tau,r)\to (\lfloor \tau t\rfloor,\lfloor \tau t \rfloor)}$ and $L_{ P_{2}(u)\to (\lfloor at \rfloor, \lfloor at\rfloor)}$ are independent for all $r,u \in \R$. Hence we get that 
  \begin{align}
  \eqref{eqqq}&=\lim_{ t\to \infty}\Pb\left( \mathcal{F}\right)\Pb\left(  \bigcap_{i=1}^{k}\{  
  \frac{L_{ P_{2}(u_i)\to (\lfloor at \rfloor, \lfloor at \rfloor)}-\mu(a,u_i)t+\mu_{u_i}t}{2^{4/3}(at)^{1/3}}
  \leq \zeta_i+\delta \}\right)  
  \\&= \Pb\left(\bigcap_{i=1}^{l}\mathcal{A}_{2}(r_i)\leq s_i\right)\Pb\left(\bigcap_{i=1}^{k}\mathcal{A}_{2}(u_i (1-\tau/a)^{1/3})\leq (\zeta_i+\delta)\frac{a^{1/3}}{(a-\tau)^{1/3}} \right),
\end{align}
finishing the proof.
  
\end{proof}
We conclude by proving the general Theorem \ref{GenGen}.
\begin{proof}[Proof of Theorem \ref{GenGen}]
For  the lower bound, note  that $\{L_{\mathcal{L}^{+}\to E_1}^{\mathrm{resc}}\leq s_1\}, \{L_{\mathcal{L}^{-}\to E_2}^{\mathrm{resc}}\leq s_2\}$ are decreasing events. 
Thus by the FKG inequality we have 
\begin{equation}
\Pb(L_{\mathcal{L}^{+}\to E_1}^{\mathrm{resc}}\leq s_1)\Pb( L_{\mathcal{L}^{-}\to E_2}^{\mathrm{resc}}\leq s_2)\leq \Pb(L_{\mathcal{L}^{+}\to E_1}^{\mathrm{resc}}\leq s_1, L_{\mathcal{L}^{-}\to E_2}^{\mathrm{resc}}\leq s_2).
\end{equation}
Now by  Assumption \ref{Assumpt1} we have 
\begin{equation}
\lim_{t \to \infty}\Pb(L_{\mathcal{L}^{+}\to E_1}^{\mathrm{resc}}\leq s_1)\Pb( L_{\mathcal{L}^{-}\to E_2}^{\mathrm{resc}}\leq s_2)=G_{1}^{a}(s_1)G_{2}^{a}(s_2),
\end{equation}
which proves the lower bound.

For the upper bound, denote  $A^{\delta}=\{L_{E^{+}\to E_1}^{\mathrm{resc}}\leq -\delta\}.$ 
Noting that $L_{\mathcal{L}^{+}\to E_1}^{\mathrm{resc}}\geq L_{E^{+}\to E_1}^{\mathrm{resc}}+  L_{\mathcal{L}^{+}\to E^{+}}^{\mathrm{resc}}$
we get from Assumptions \ref{Assumpt1},\ref{Assumpt2} and  \ref{Assumpt3}

\begin{equation}
\begin{aligned}
&\lim_{t_k \to \infty}\Pb(L_{\mathcal{L}^{+}\to E_1}^{\mathrm{resc}}\leq s_1, L_{\mathcal{L}^{-}\to E_2}^{\mathrm{resc}}\leq s_2)
\\&\leq 
\lim_{t_k \to \infty}
\Pb(\{L_{E^{+}\to E_1}^{\mathrm{resc}}+  L_{\mathcal{L}^{+}\to E^{+}}^{\mathrm{resc}}\leq s_1\}\cap\{ L_{\mathcal{L}^{-}\to E_2}^{\mathrm{resc}}\leq s_2\}\cap (A^{\delta}\cup (A^{\delta})^c))
\\&\leq G_{0}^{a}(-\delta)+\lim_{t_k \to \infty}\Pb(\{-\delta + L_{\mathcal{L}^{+}\to E{+}}^{\mathrm{resc}}\leq s_1\}\cap \{ L_{\mathcal{L}^{-}\to E_2}^{\mathrm{resc}}\leq s_2\}\cap
(A^{\delta})^c)
\\& 
\leq G_{0}^{a}(-\delta)+
\lim_{t_k \to \infty}\Pb(\{ L_{\mathcal{L}^{+}\to E^{+}}^{\mathrm{resc}}\leq s_1+\delta \}\cap \{ L_{\mathcal{L}^{-}\to E_2}^{\mathrm{resc}}\leq s_2\})
\\& \leq G_{0}^{a}(-\delta)+\tilde{\psi}+
\lim_{t_k \to \infty}\Pb(\{ \tilde{L}_{\mathcal{L}^{+}\to E^{+}}^{\mathrm{resc}}\leq s_1+\delta \})\Pb(\{ \tilde{L}_{\mathcal{L}^{-}\to E_2}^{\mathrm{resc}}\leq s_2\})
\\&\leq  G_{0}^{a}(-\delta)+3\tilde{\psi}+
\lim_{t_k \to \infty}\Pb(\{ L_{\mathcal{L}^{+}\to E^{+}}^{\mathrm{resc}}\leq s_1+\delta \})\Pb(\{ L_{\mathcal{L}^{-}\to E_2}^{\mathrm{resc}}\leq s_2\})
\\&= G_{0}^{a}(-\delta)+3\tilde{\psi}+G_{1}^{a}((s_1+\delta)c_{\varepsilon(a)})G_{2}^{a}(s_2).
\end{aligned}
\end{equation}
\end{proof}

\section{Proof for line-to-point problems : Theorems \ref{shockflat} and \ref{airydec} }\label{othgen} In this Section we prove  Theorem \ref{shockflat} as well as  Theorem \ref{airydec}.

We also  discuss   the transition to shock fluctuations when for $a=0$  one has flat (deterministic) initial data.

We start with  the proof of Theorem  \ref{airydec}. The proof is summarized in Figure 2, the actual proof is slightly technical.

\begin{figure}[h]
  \centering
\begin{tikzpicture}

\draw [ very thick, ->] (0,-3) -- (0,3) node[anchor=east] {$\mathbb{Z}$};
\draw [  thick] (2.6,-2.6) -- (-2.6,2.6) node[anchor=east] {$\mathcal{L}$};
\draw [red, very thick,<->] (0.3,-0.3) -- (-0.3,0.3) ;
\draw [blue, very thick,<->] (-0.5,0.5) -- (-1.1,1.1) ;

\draw  (-1.6,0.45)  node[above=1pt]{$\mathcal{F}_{2}(k)$};
\draw  (-0.7,-0.7)  node[above=1pt]{$\mathcal{F}_{1}(k)$};

\draw  (1.2,2.8)  node[above=1pt]{$E_{2}$};
\draw  (1.9,2)  node[above=1pt]{$E_{1}$};
\filldraw     (2,2)      circle (0.1 cm);
\filldraw     (1.2,2.8)      circle (0.1 cm);
\draw [ very thick, ->] (-3,0) -- (3,0) node[anchor=south] {$\mathbb{Z}$};
\begin{scope}[xshift=1.4cm]
\draw[red,very thick,dashed] (-1.4,0)--(-1.2,0.05)--(-1,0.3)--(-0.8,0.35)--(-0.65,0.5)--(-0.4,0.55)--(-0.3,0.6)--(-0.2,0.9)--(0,0.93)--(0.03,1)--(0.1,1.3)--(0.2,1.4)--(0.3,1.45)--(0.6,2);
\end{scope}
\begin{scope}[xshift=-0.8cm,yshift=2cm]
\draw[blue,very thick,dashed] (0,-1.2)--(0.2,-1)--(0.3,-0.96)--(0.34,-0.9)--(0.4,-0.8)--(0.5,-0.6)--(0.75,-0.52)--(0.85,-0.4)--(1,-0.2)--(1.1,-0.1)--(1.2,-0.02)--(1.3,0.1)--(1.5,0.4)--(1.7,0.45)--(1.8,0.6)--(2,0.9);
\end{scope}
\end{tikzpicture}
\caption{Proving \eqref{decouplage}: The maximizer (blue, dashed)  $\pi_{\mathcal{L}\to E_2}^{\mathrm{max}}$  from $\mathcal{L}=\{(-k,k):k\in \Z\}$ to $E_{2}=(t-at^{2},t+at^{2/3})$ starts with high probability in the line segment $F_{2}(k)$ (blue, see \eqref{FF}). In this event, the transversal fluctuations of $\pi_{\mathcal{L}\to E_2}^{\mathrm{max}}$ are  bounded by those of the maximizer starting at the bottom right end point  of  $F_{2}(k)$ and going to $E_2$, which themselves are bounded by Theorem \ref{trans1}. The same argument  applies to $\pi_{\mathcal{L}\to E_1}^{\mathrm{max}}$ (red,dashed) with $E_{1}=(t,t)$.  This shows $\pi_{\mathcal{L}\to E_1}^{\mathrm{max}},\pi_{\mathcal{L}\to E_2}^{\mathrm{max}}$  stay in disjoint sets with high probability, which together with convergence to the  $\mathrm{Airy}_{1}$ process of the LPP time, implies \eqref{decouplage}. 
}
\end{figure}
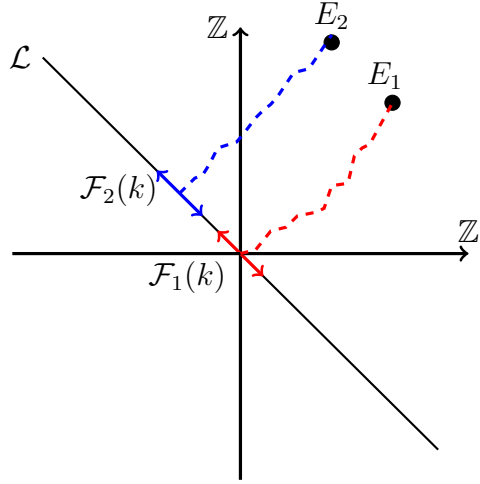

\begin{proof}[Proof of Theorem \ref{airydec}]
We start by proving \eqref{decouplage}.
We apply Theorem \ref{GenGen} with 
 $\mathcal{L}= \mathcal{L}^{+}=\mathcal{L}^{-}=\{  (-k,k):k\in \Z  \} , E_{+}=E_1=(\lfloor t \rfloor ,\lfloor t\rfloor ), E_{2}=(\lfloor t\rfloor -\lfloor at^{2/3}\rfloor ,\lfloor t\rfloor +\lfloor at^{2/3}\rfloor ).$ One obtains from Theorem 2.2 of \cite{BF07} and the link between TASEP and LPP (all weights i.i.d., $\omega_{i,j}\sim \exp(1)$)
\begin{equation}\label{airy1}
\lim_{t \to \infty} \Pb(\cap_{i=1}^{2}\{L_{\mathcal{L}\to E_i}\leq 4t +s_i t^{1/3}\})=\Pb(\mathcal{A}_1 (0) \leq 2^{-5/3}s_1,\mathcal{A}_1 (a4^{-2/3}) \leq 2^{-5/3}s_2).
\end{equation}
Also for $i=1,2$ we have
\begin{equation}\label{GOE}
\lim_{ t\to \infty}\Pb(L_{\mathcal{L}\to E_i}\leq 4t+st^{1/3})=F_{\GOE}(2^{-2/3}s).
\end{equation} 
Now \eqref{GOE} shows Assumption \ref{Assumpt1}, and Assumption \ref{Assumpt2}
is trivially fullfilled with  $c_{\varepsilon(a)}=1, G_{0}^{a}=\mathbf{1}_{[0, \infty)}$.
Set now for $k \in \Z$ \begin{align}
&\mathcal{F}^{1}(k)=\{(- i , i ),i=-\lfloor kt^{2/3}\rfloor ,\ldots,\lfloor kt^{2/3}\rfloor\},
\\&\mathcal{F}^{2}(k)=\{(-\lfloor at^{2/3}\rfloor -i ,\lfloor a t^{2/3}\rfloor + i ),i=-\lfloor kt^{2/3}\rfloor ,\ldots,\lfloor kt^{2/3}\rfloor \}.\label{FF}
\end{align}
Denote by $\pi^{\mathrm{max}}_{\mathcal{L}\to E_i}$ the maximizing path from 
$\mathcal{L}$ to $E_i$ and by  $\pi^{\mathrm{max}}_{\mathcal{L}\to E_i}(0)$ the point of $\pi^{\mathrm{max}}_{\mathcal{L}\to E_i}$ which belongs to $\mathcal{L}$. By a simple shift one sees $\Pb(\pi^{\mathrm{max}}_{\mathcal{L}\to E_i}(0)\in \mathcal{F}^{i}(k))$ is the same for $i=1,2$. Consequently, by (4.18) of \cite{FO17} one gets that for $k$ sufficiently large and some constants $C,c$
\begin{equation}\label{pi}
\Pb(\cup_{i=1}^{2}\{\pi^{\mathrm{max}}_{\mathcal{L}\to E_i}(0)\notin \mathcal{F}^{i}(k)\})\leq Ce^{-ck^{2}}.
\end{equation}
Let $E_3= \left(-\left\lfloor \frac{a}{4}t^{2/3}\right\rfloor ,\left\lfloor \frac{a}{4}t^{2/3}\right\rfloor\right)$ and $E_4=E_3+E_1.$  Define also $ E_5=\left(-\left\lfloor \frac{3a}{4}t^{2/3}\right\rfloor ,\left\lfloor \frac{3a}{4}t^{2/3}\right\rfloor\right),E_6=E_5+E_1.$ Denote by $R_{1}(k)$ the straight line (in $\Z^{2}$) which goes through  $E_3+(-\lfloor k t^{2/3}\rfloor, \lfloor k t^{2/3}\rfloor)$  and $E_4+(-\lfloor k t^{2/3}\rfloor, \lfloor k t^{2/3}\rfloor)$ and by $R_{2}(k)$ the straight line going through $E_5+(\lfloor k t^{2/3}\rfloor, -\lfloor k t^{2/3}\rfloor)$ and $E_6+(\lfloor k t^{2/3}\rfloor, -\lfloor k t^{2/3}\rfloor)$. Denote by $U_1$ the event
$\{\pi^{\mathrm{max}}_{E_3 \to E_4}\cap R_{1}(a/10)=\emptyset\}$ and by $U_2$ the event $\{\pi^{\mathrm{max}}_{E_5 \to E_6}\cap R_{2}(a/10)=\emptyset\}.$

 It follows from Theorem \ref{trans1}  that 
\begin{equation}\label{U}
\limsup_{t \to \infty}\Pb(\cup_{i=1}^{2}U_{i}^{c})\leq  Ce^{-c a}.
\end{equation}

Now the event 
\begin{equation}
\bigcap_{i=1}^{2}\{\pi^{\mathrm{max}}_{\mathcal{L}\to E_i}(0)\in \mathcal{F}^{i}(a/10)\}\cap \bigcap_{i=1}^{2}U_i
\end{equation}
is a subset of 
\begin{equation}
\bigcap_{i=1}^{2}\{\pi^{\mathrm{max}}_{\mathcal{L}\to E_i}\cap R_{i}(a/10)=\emptyset\}
\end{equation}
(taking $a,t$ sufficiently large).
 Denote now for $i=1,2$ by $\Pi^{i}$ the set of up-right paths from $\mathcal{L}$ to $E_i$ which contain no point of $R_{i}(a/10).$ Set 
 \begin{equation}
 \tilde{L}_{\mathcal{L}\to E_i}=\max_{\pi \in \Pi^{i}}\sum_{(i,j)\in \pi}\omega_{i,j}.
  \end{equation}
  Note that $ \tilde{L}_{\mathcal{L}\to E_i},i=1,2$ are independent, and 
 by \eqref{pi}, \eqref{U} Assumption \ref{Assumpt3} is fullfilled with 
 \begin{equation}\label{psi(aa)}
 \tilde{\psi}=\tilde{\psi}(a)=Ce^{-c a^{2}}+ Ce^{-ca}.
 \end{equation} This finishes the proof.

 Next we come to the proof of \eqref{decouplage2}. Set $\mathcal{L}^{\mathrm{half}}=\{(-k,k):k\geq 0\}$. It follows from 
 Theorem 2 of \cite{BFS07} that with $E(k)=(\lfloor t -k t^{2/3}\rfloor,\lfloor t +k t^{2/3}\rfloor)$ and $b_1,b_2 \in \R$
 \begin{equation}
  \begin{aligned}\label{ass1a21}
& \lim_{t \to \infty}\Pb(\cap_{i=1}^{2}\{L_{\mathcal{L}^{\mathrm{half}}\to E(b_i)}\leq 4t+(s_i-2^{4/3}\min\{0,b_i\}^{2})t^{1/3}\})
 \\&=\Pb(\mathcal{A}_{2 \to 1}(b_12^{-2/3})\leq 2^{-4/3} s_1, \mathcal{A}_{2 \to 1}(b_2 2^{-2/3})\leq 2^{-4/3} s_2)«
 \end{aligned}
 \end{equation}
 To localize $\pi^{\mathrm{max}}_{\mathcal{L}^{\mathrm{half}}\to E(|b|+a)}(0)$ note that by a simple coupling, with $E_7= (-\lfloor (|b|+a)t^{2/3}\rfloor,\lfloor (|b|+a)t^{2/3}\rfloor)$  and \begin{equation*}\mathcal{F}^{3}(k)=\{E_7+(-i,i) :i=-\lfloor k t^{2/3}\rfloor,\ldots,\lfloor kt^{2/3}\rfloor\}\end{equation*}
 we have 
$ \{\pi^{\mathrm{max}}_{\mathcal{L}\to E(|b|+a)}(0)\in \mathcal{F}^{3}(a/10)\}\subseteq  \{\pi^{\mathrm{max}}_{\mathcal{L}^{\mathrm{half}}\to E(|b|+a)}(0)\in \mathcal{F}^{3}(a/10)\}$
such that $\Pb( \{\pi^{\mathrm{max}}_{\mathcal{L}^{\mathrm{half}}\to E(|b|+a)}(0)\notin \mathcal{F}^{3}(a/10)\})\leq Ce^{-ca^{2}}$ by (4.18) in \cite{FO17}.
Similarily, one can control $\Pb(\pi^{\mathrm{max}}_{\mathcal{L}^{\mathrm{half}}\to E(|b|+a/5)}(0)\notin  \mathcal{F}^{4}(a/20))\leq Ce^{-ca^{2}},$ where 
$ \mathcal{F}^{4}(k)=\{E_8+(-i,i):i=-\lfloor k t^{2/3}\rfloor, \ldots,\lfloor k t^{2/3}\rfloor\}$ with  $E_8=(-\lfloor (|b|+a/5)t^{2/3}\rfloor, \lfloor  (|b|+a/5)t^{2/3}\rfloor)$.
Let $R_{3}(k) $ be the line which connects $E_8+(-\lfloor k t^{2/3}\rfloor, \lfloor k t^{2/3}\rfloor)$ with $E(|b|+a/5)+(-\lfloor k t^{2/3}\rfloor, \lfloor k t^{2/3}\rfloor)$
and $R_{4}(k)$  the line which connects $E_7+(\lfloor k t^{2/3}\rfloor,- \lfloor k t^{2/3}\rfloor)$ with $E(|b|+a)+(\lfloor k t^{2/3}\rfloor,- \lfloor k t^{2/3}\rfloor).$
As was done above, we can bound 
\begin{equation}
\begin{aligned}\label{a221}
&\Pb( \pi^{\mathrm{max}}_{\mathcal{L}^{\mathrm{half}}\to E(|b|+a/5)}\cap R_{3}(a/10)\neq \emptyset)\leq Ce^{-c a^{2}}+Ce^{-ca}
\\&\Pb( \pi^{\mathrm{max}}_{\mathcal{L}^{\mathrm{half}}\to E(|b|+a)}\cap R_{4}(a/10)\neq \emptyset)\leq Ce^{-c a^{2}}+ Ce^{-ca}.
\end{aligned}
\end{equation}
Note now that if $  \pi^{\mathrm{max}}_{\mathcal{L}^{\mathrm{half}}\to E(|b|+a/5)}$ contains no point of $R_{3}(k)$ then this is also true for $  \pi^{\mathrm{max}}_{\mathcal{L}^{\mathrm{half}}\to E(b)}$. Let $\Pi^{3}$ be the up-right paths from $\mathcal{L}^{\mathrm{half}}$ to $E(b)$ which contain no point of $R_{3}(a/10)$, and $  \Pi^{4}$ be the up-right paths from $\mathcal{L}^{\mathrm{half}}$ to $E(|b|+a)$ which contain no point of $R_{4}(a/10)$. We define the independent random variables 
\begin{equation}
\tilde{L}_{\mathcal{L}^{\mathrm{half}}\to E(b)}=\max_{\pi \in \Pi^{3}}\sum_{(i,j)\in \pi}\omega_{i,j} \quad  \tilde{L}_{\mathcal{L}^{\mathrm{half}}\to E(|b|+a)}=\max_{\pi \in \Pi^{4}}\sum_{(i,j)\in \pi}\omega_{i,j}.
\end{equation}
Now we take $\mathcal{L}^{\mathrm{half}}=\mathcal{L}^{+}=\mathcal{L}^{-},E_{1}=E_{+}=E(b), E_{2}=E(|b|+a).$ Then Assumption \ref{Assumpt1} holds by  \eqref{ass1a21}, and Assumption \ref{Assumpt2} holds trivially with $c_{\varepsilon(a)}=1,G_{0}^{a}=\mathbf{1}_{[0,\infty)}$. Finally, by \eqref{a221}, Assumption \ref{Assumpt3} holds with $\tilde{\psi}$ as in \eqref{psi(aa)}.

\end{proof}
Next we come to the proof of  Theorem \ref{shockflat}. Here we need to use slow decorrelation again, as we have two maximizing paths going to the same endpoint, on the other hand, the proof is easier since the two maximizers start at points at distance $\mathcal{O}(T)$ from each other (see \eqref{distrho}), not $\mathcal{O}(T^{2/3})$ as in Theorems  \ref{Thmpp}, \ref{airydec}.
\begin{proof}[Proof of Theorem \ref{shockflat}]
We work in the last passage picture, where Theorem \ref{shockflat} is equivalent to
\begin{equation}\label{help}
\lim_{T \to \infty}\Pb(L_{\mathcal{L}^{\varrho_{1}}\cup \mathcal{L}^{\varrho_{2}}\to E}\leq T)=F_{\GOE}(2^{2/3}(s-\xi/\varrho_{1})c_1)F_{\GOE}(2^{2/3}(s-\xi/\varrho_{2})c_2),
\end{equation}
where we defined 
\begin{align}&E=((1-\varrho_{1}-\varrho_{2}+\varrho_1 \varrho_2 )T-(s-\xi)T^{1/3},\varrho_{1}\varrho_{2}T+\xi T^{1/3}) 
\\& \mathcal{L}^{\varrho_{1}}=\{(n-\lfloor n/\varrho_{1}\rfloor ,n),n\leq 0\}
\quad \mathcal{L}^{\varrho_{2}}=\{(n-\lfloor n/\varrho_{2}\rfloor ,n),n> 0\}.
\end{align} Denote by $\pi^{\mathrm{max}}_{\mathcal{L}^{\varrho_{i}}\to E},i=1,2$ the maximizing paths from $\mathcal{L}^{\varrho_{i}}\to E.$ Consider the points
\begin{equation}\label{Spoint}
S_{\varrho_{1}}=((1-\varrho_{1})(\varrho_{1}-\varrho_{2}),\varrho_{1} (\varrho_{2}-\varrho_{1}))T \quad S_{\varrho_{2}}=((1-\varrho_{2})(\varrho_{2}-\varrho_{1}), (\varrho_{1}-\varrho_{2})\varrho_{2})T.
\end{equation} 

The lines $\overline{S_{\varrho_{i}} E}$ are the characteristic lines of $\pi^{\mathrm{max}}_{\mathcal{L}^{\varrho_{i}}\to E}.$  Note that 

\begin{equation}\label{distrho}||S_{\varrho_{1}}-S_{\varrho_{2}}||_{2}=\mathcal{O}((\varrho_{1}-\varrho_{2})T).\end{equation}
That \eqref{distrho} is of order $T$ (and not $T^{2/3}$) is the main difference to all other situations considered in this paper, and implies that $L_{\mathcal{L}^{\varrho_{1}}\to E},L_{\mathcal{L}^{\varrho_{2}}\to E}$ decouple already in the  $T \to \infty$ limit.
Define the point 
\begin{equation}\label{eee}E^{\varrho_{2}}=E-((1-\varrho_{2})^{2}T^{\nu},\varrho_{2}^{2}T^{\nu}),\nu \in (2/3,1),\end{equation} which lies  on $\overline{S_{\varrho_{2}} E}$.
We can then localize the starting point of $\pi^{\mathrm{max}}_{\mathcal{L}^{\varrho_{2}}\to E^{\varrho_{2}}}$ by (4.18) of \cite{FO17}, which gives that 
\begin{equation}\label{H2}
H_{2}=\{\pi^{\mathrm{max}}_{\mathcal{L}^{\varrho_{2}}\to E^{\varrho_{2}}}(0)\in \{P\in \mathcal{L}^{\varrho_{2}}: ||P- S_{\varrho_{2}}||_{2}\leq k T^{2/3}\}\}
\end{equation}
has probability $\Pb(H_{2})>1-e^{ck^{2}}$ for $T,a$ large enough. For our purposes it suffices to know that for any $\varepsilon\in (0,1/3)$  we have that 
\begin{equation}
\tilde{H}_{2}=\{\pi^{\mathrm{max}}_{\mathcal{L}^{\varrho_{2}}\to E^{\varrho_{2}}}(0)\in \{P\in \mathcal{L}^{\varrho_{2}}: ||P- S_{\varrho_{2}}||_{2}\leq  T^{2/3+\varepsilon}\}\}
\end{equation}
satisfies  $\Pb(\tilde{H}_{2})\to_{T \to \infty}1.$ 
Exactly as in the proof of Theorem  \ref{airydec}, on the event   $\tilde{H}_{2},$ we can bound the transversal fluctuations of $\pi^{\mathrm{max}}_{\mathcal{L}^{\varrho_{2}}\to E^{\varrho{2}}}$ by the transversal fluctuations  of $\pi^{\mathrm{max}}_{P^{\prime}\to  E^{\varrho_{2}}}$ where $P^{\prime}$ is the bottom right end point of the line segment $\{P\in \mathcal{L}^{\varrho_{2}}: ||P- S_{\varrho_{2}}||_{2}\leq  T^{2/3+\varepsilon}\}:$ By Theorem \ref{trans1}, the transversal fluctuations of $\pi^{\mathrm{max}}_{P^{\prime}\to  E^{\varrho_{2}}}$ around $\overline{P^{\prime} E^{\varrho_{2}}}$ are $o(T^{2/3+\varepsilon})$ for any $\varepsilon >0$.

We can localize the starting point  $\pi^{\mathrm{max}}_{\mathcal{L}^{\varrho_{1}}\to E}(0)$ in exactly the same way, as well as the transversal fluctuations of $\pi^{\mathrm{max}}_{\mathcal{L}^{\varrho_{1}}\to E}$.
If we choose $1>\nu >2/3+\varepsilon,$  then  we have  shown there are disjoint deterministic  sets $D_{\varrho_{1}},D_{\varrho_{2}},$ such that 
\begin{equation}\label{jjj}
\lim_{T\to \infty}\Pb(\pi^{\mathrm{max}}_{\mathcal{L}^{\varrho_{2}}\to E^{\varrho_{2}}}\subset D_{\varrho_{2}}, \pi^{\mathrm{max}}_{\mathcal{L}^{\varrho_{1}}\to E}\subset D_{\varrho_{1}})=1.
\end{equation}
This implies 
\begin{align}
\lim_{T \to \infty}\Pb(L_{\mathcal{L}^{\varrho_{1}}\cup \mathcal{L}^{\varrho_{1}}\to E}\leq T)
&=\lim_{T \to \infty}\Pb(L_{\mathcal{L}^{\varrho_{1}}\to E}\leq T,L_{\mathcal{L}^{\varrho_{2}}\to E}\leq T)
\\&=\lim_{T \to \infty}\Pb(L_{\mathcal{L}^{\varrho_{1}}\to E}\leq T)\Pb(L_{\mathcal{L}^{\varrho_{2}}\to E}\leq T)
\end{align}
(the first identity holds by definition, the second by \eqref{jjj} and slow decorrelation, which implies that $T^{-1/3}(L_{\mathcal{L}^{\varrho_{2}}\to E^{\varrho_{2}}}  +\mu_{0}T^{\nu}- L_{\mathcal{L}^{\varrho_{2}}\to E})$ converges to $0$ in probability for the correct $\mu_{0}$). So what remains to compute is 
\begin{align}
\lim_{T \to \infty}\Pb(L_{\mathcal{L}^{\varrho_{1}}\to E}\leq T)&\Pb(L_{\mathcal{L}^{\varrho_{2}}\to E}\leq T)\\&=F_{\GOE}(2^{2/3}(s-\xi/\varrho_{1})c_1)F_{\GOE}(2^{2/3}(s-\xi/\varrho_{2})c_2),
\end{align}
which has, (with the $\mathcal{O}(T^{1/3})$ terms taken from $E$ into the scaling),  been proven in Lemma 2.4 of \cite{FGN17}.
\end{proof}

Now let us come to the statement   \eqref{you}. We follow the proof of Theorem \ref{shockflat}. We choose 
the point  \begin{equation}
\tilde{E}=((1-\varrho_{1}(a)-\varrho_{2}+\varrho_{1}(a) \varrho_{2})T-\xi T^{2/3}(\frac{1}{a\varrho_{2}}-\frac{1}{a})-s T^{1/3}/c_{2}, \varrho_{1}(a) \varrho_{2}T+\xi T^{2/3}/a).
\end{equation}

The first difference arises in  \eqref{help}: It is  unclear what 
\begin{equation}\label{unclear}
\lim_{T \to \infty}\Pb(L_{\mathcal{L}^{\varrho_{1}(a)}\cup \mathcal{L}^{\varrho_{2}}\to \tilde{E}}\leq T)
\end{equation}
is, and a priori it is even unclear  whether \eqref{unclear} exists as a limit in distribution. Knowing this is however not needed to study the $a\to \infty$ asymptotics, 
we may  simply consider an arbitrary subsequential limit $T_{k}\to \infty$  and show  that we have 
\begin{equation}\begin{aligned}\label{fix}
 \lim_{a \to + \infty} \lim_{T_{k} \to \infty}&\Pb(L_{\mathcal{L}^{\varrho_{1}(a)}\cup \mathcal{L}^{\varrho_{2}}\to \tilde{E}}\leq T_{k})
 \\&= F_{\GOE}(2^{2/3}s )F_{\GOE}(2^{2/3}s-2^{2/3}\xi (1-\varrho_{2})^{-2/3}\varrho_{2}^{-5/3}).
\end{aligned}
\end{equation}
Now to show \eqref{fix}, we again need to choose the point $E^{\varrho_{2}}$ from \eqref{eee}, but as in the proof of Theorem \ref{Thmpp}, $E^{\varrho_{2}}$ needs to have distance $\mathcal{O}(T)$ from $\tilde{E}$, we may choose 
$E^{\varrho_{2}}=\tilde{E}-((1-\varrho_{2})^{2}Ta^{-1/2},\varrho_{2}^{2}Ta^{-1/2}).$  With this choice, we have an extended slow decorrelation result  in the double limit $\lim_{a \to +\infty}\lim_{T \to \infty}$.

Next note that the maximizers  $\pi^{\mathrm{max}}_{\mathcal{L}^{\varrho_{2}}\to E^{\varrho_{2}}},\pi^{\mathrm{max}}_{\mathcal{L}^{\varrho_{1}(a)}\to \tilde{E}}$ 
start in $\mathcal{O}(T^{2/3})$ neighborhoods of the  points  $S_{\varrho_{1}(a)},S_{\varrho_{2}}$ (defined in \eqref{Spoint}), which themselves have distance $CaT^{2/3}+o(T^{2/3}),C>0,$  from each other (see \eqref{distrho}).
Doing a refined localization of the starting points as in \eqref{H2} with e.g. $k=a^{1/10}$, and bounding the transversal fluctuations, we see that  there are disjoint deterministic sets $\tilde{D}_{\varrho_{1}(a)},\tilde{D}_{\varrho_{2}}$ such that 
\begin{equation}
\lim_{a \to + \infty}
\liminf_{T\to \infty}\Pb(\pi^{\mathrm{max}}_{\mathcal{L}^{\varrho_{2}}\to E^{\varrho_{2}}}\subset \tilde{D}_{\varrho_{2}}, \pi^{\mathrm{max}}_{\mathcal{L}^{\varrho_{1}(a)}\to \tilde{E}}\subset \tilde{D}_{\varrho_{1}(a)})=1, 
\end{equation}
which implies 
\begin{equation}
\begin{aligned}
 \lim_{a \to + \infty} \lim_{T_{k} \to \infty}\Pb(L_{\mathcal{L}^{\varrho_{1}(a)}\cup \mathcal{L}^{\varrho_{2}}\to \tilde{E}}\leq T_{k})=\lim_{a \to + \infty} \lim_{T_k \to \infty} &\Pb(L_{\mathcal{L}^{\varrho_{1}(a)} \to \tilde{E}}\leq T_k)
\\&\times\Pb(L_{ \mathcal{L}^{\varrho_{2}}\to \tilde{E}}\leq T_k).
\end{aligned}
\end{equation}
Finally, we need to show that the individual $ \Pb(L_{\mathcal{L}^{\varrho_{1}(a)}\to \tilde{E}}\leq T), \Pb(L_{\mathcal{L}^{\varrho_{2}}\to \tilde{E}}\leq T)$ converge to $F_{\GOE}$  in the double limit $\lim_{a \to + \infty}\lim_{T \to \infty}$ . The localization of the starting point shows that when considering the full lines $ \mathcal{L}^{\varrho_{1}(a),\infty}=\{(n-\lfloor n/\varrho_{1}(a)\rfloor ,n),n \in \Z\}
\quad \mathcal{L}^{\varrho_{2},\infty }=\{(n-\lfloor n/\varrho_{2}\rfloor ,n),n\in \Z \}$ we have that 
\begin{equation}
\lim_{a \to + \infty} \limsup_{T \to\infty}\Pb(\{L_{\mathcal{L}^{\varrho_{1}(a)}\to \tilde{E}}\neq L_{\mathcal{L}^{\varrho_{1}(a),\infty}\to \tilde{E}} \}\cup\{L_{\mathcal{L}^{\varrho_{2}}\to \tilde{E}}\neq L_{\mathcal{L}^{\varrho_{2},\infty}\to \tilde{E}}\})=0.
\end{equation}
Now the convergence, for $a$ fixed, of the properly rescaled $L_{\mathcal{L}^{\varrho_{1}(a),\infty}\to \tilde{E}},L_{\mathcal{L}^{\varrho_{2},\infty}\to \tilde{E}}$  to $F_{\GOE}$ should  be  deduced  from Theorem 2.1 of \cite{FO17}.  This would  then prove \eqref{fix}.

\bibliographystyle{patplain}

\bibliography{Biblio}

\end{document}